\title{Helly-type theorems for monotone properties of boxes}
\author{N\'ora Frankl\footnote{School of Mathematics and Statistics, The Open University, UK, nora.frankl@open.ac.uk} \hspace{0.1em} and Attila Jung\footnote{ELTE E\"{o}tv\"{o}s Lor\'{a}nd University and HUN-REN Alfr\'ed R\'enyi Institute of Mathematics, Hungary, jungattila@gmail.com}}
\date{May 2024}
\newtheorem{theorem}{Theorem}[section]
\newtheorem{lemma}[theorem]{Lemma}
\newtheorem{claim}[theorem]{Claim}
\newtheorem{prop}[theorem]{Proposition}
\newtheorem{cor}[theorem]{Corollary}
\newcommand{\B}{\mathcal{B}}
\newcommand{\F}{\mathcal{F}}
\renewcommand{\Re}{\mathbb{R}}
\begin{document}
	
	\maketitle
	
	\begin{abstract}
		We present a unified approach to prove Helly-type theorems for monotone properties of boxes, such as having large volume or containing points from a given set. As a corollary, we obtain new proofs for several earlier results regarding specific monotone properties. Our results generalise to $H$-convex sets as well.
	\end{abstract}

	\section{Introduction}\label{sec:intro}
	Helly's theorem \cite{helly} states that if in a family of convex sets in $\Re^d$ the intersection of any $d+1$ members is nonempty, then the intersection of the whole family is nonempty. This is a  cornerstone result in Discrete Geometry with many generalisations and extensions, see for example the survey~\cite{barany2022helly}.
	
	For special families of convex sets often stronger results hold. Among such families, axis parallel boxes (called simply \emph{boxes} from now on) received particular attention \cite{katchalski1980boxes, danzer1982intersection, eckhoff, eckhoff1991intersection, soberon}. It is folklore that requiring non-empty pairwise intersections in a family of boxes guarantees a common point in the whole family, regardless of the dimension \cite{eckhoff}. Indeed, project the family to each coordinate axes, and apply Helly's theorem in each of the $d+1$ families of intervals obtained this way.

	The quantitative volume theorem of B\'ar\'any, Katchalski and Pach~\cite{barany1982quantitative} states that if in a finite family $\F$ of convex sets the intersection of any $2d$ members has volume at least $1$, then the intersection of the whole family is of volume at least $v(d)$, for some positive function $v(d)$. Nasz\'odi proved that $v(d)$ can be chosen to be of order $d^{-cd}$ \cite{naszodi2016proof}. Although the number $2d$ is best possible even for families of boxes, the volume bound can be improved to $v(d)=1$ in this case. This is a direct corollary of the following well-known Lemma.
	
	\begin{lemma}\label{lem:strongHellyBoxes}
		For any finite family $\F$ of boxes in $\Re^d$, there is a subfamily $\F' \subset \F$ of size at most $2d$ such that $\cap \F = \cap \F'$.
	\end{lemma}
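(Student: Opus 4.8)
The plan is to reduce the statement to the one-dimensional case, coordinate by coordinate. Write each box $B \in \F$ as a product $B = \prod_{i=1}^{d} \pi_i(B)$ of its orthogonal projections onto the coordinate axes, so that each $\pi_i(B)$ is an interval on the $i$-th axis. The elementary fact I would use throughout is that intersection commutes with this product decomposition: for any subfamily $\mathcal{G} \subseteq \F$ one has $\bigcap \mathcal{G} = \prod_{i=1}^{d}\bigcap_{B \in \mathcal{G}} \pi_i(B)$. Hence it suffices, for each fixed coordinate $i$, to exhibit a small subfamily of $\F$ whose $i$-th projections have the same intersection as $\{\pi_i(B) : B \in \F\}$.

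First I would treat each coordinate $i$ in isolation. Since $\F$ is finite, write $\pi_i(B) = [a_i(B), b_i(B)]$ and pick a box $B_i^{-}$ maximising $a_i(\cdot)$ over $\F$ and a box $B_i^{+}$ minimising $b_i(\cdot)$ over $\F$ (breaking ties arbitrarily). Then $\bigcap_{B \in \F}\pi_i(B) = [\,a_i(B_i^{-}),\, b_i(B_i^{+})\,]$, and this interval already equals $\pi_i(B_i^{-}) \cap \pi_i(B_i^{+})$, because every other interval $\pi_i(B)$ contains $[\,a_i(B_i^{-}),\, b_i(B_i^{+})\,]$ by the choice of $B_i^{-}$ and $B_i^{+}$. (If this interval is empty, the two chosen boxes already witness a pair with disjoint $i$-th projections, hence empty intersection.) This is just Helly's theorem on the line, but it is cleaner to quote the explicit endpoint description.

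Then I would set $\F' = \{\,B_i^{-},\, B_i^{+} : 1 \le i \le d\,\}$, a subfamily of $\F$ of size at most $2d$. Combining the two observations above, for every coordinate $i$ the $i$-th projections of $\F'$ and of $\F$ have the same intersection, and therefore
\[
\bigcap \F' \;=\; \prod_{i=1}^{d}\bigcap_{B \in \F'}\pi_i(B) \;=\; \prod_{i=1}^{d}\bigcap_{B \in \F}\pi_i(B) \;=\; \bigcap \F .
\]
(The reverse inclusion $\bigcap \F \subseteq \bigcap \F'$ is anyway immediate from $\F' \subseteq \F$.)

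I do not expect a genuine obstacle: the statement is folklore, and the only points needing a word of care are the possibly empty intersection, handled above, and the remark that nothing changes if some or all of the boxes are open or half-open — one simply uses the appropriate interval type in place of the closed interval and the same endpoint comparison goes through. If a less self-contained write-up is preferred, the second and third paragraphs can be compressed to: project $\F$ onto each axis, apply the one-dimensional Helly number (namely $2$) to each of the $d$ resulting families of intervals, and take the union of the at most two boxes returned for each coordinate.
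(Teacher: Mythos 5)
Your proof is correct, and it is essentially the same argument the paper relies on: the paper states the lemma as folklore and proves the stronger Theorem~\ref{thm:colourfulStrongHelly} (via Theorem~\ref{thm:HcolourfulStrongHelly}) by picking, for each of the $2d$ defining halfspace orderings, a minimal member, which for a single family of boxes is exactly your choice of the box maximising $a_i(\cdot)$ and the box minimising $b_i(\cdot)$ in each coordinate. No gaps; the handling of the empty-intersection case and of open/half-open boxes is fine.
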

	
	We call a property $P$ of boxes a \emph{monotone property} if $B \in P$ and $B \subset C$ implies $C \in P$. Lemma \ref{lem:strongHellyBoxes} implies results not only with respect to the volume, but also with respect to any monotone property.
	
	\begin{cor}\label{cor:HellyBoxesP}
		For any monotone property $P$ and any finite family $\F$ of boxes in $\Re^d$ if the intersection of every subfamily of $\F$ of size $2d$ has property $P$, then the intersection of all the members of $\F$ has property $P$.
	\end{cor}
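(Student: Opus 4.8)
The plan is to derive Corollary~\ref{cor:HellyBoxesP} directly from Lemma~\ref{lem:strongHellyBoxes}; essentially all the work is already contained in the lemma, and the only thing left to do is to reconcile its ``size at most $2d$'' conclusion with the ``size exactly $2d$'' form of the hypothesis.

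First I would deal with the degenerate case $|\F| < 2d$ separately (depending on how one reads the hypothesis it is either vacuous or directly about $\cap \F$ itself), so from now on assume $|\F| \ge 2d$. Applying Lemma~\ref{lem:strongHellyBoxes} yields a subfamily $\F' \subset \F$ with $|\F'| \le 2d$ and $\cap \F = \cap \F'$. Since $|\F| \ge 2d$, enlarge $\F'$ to a subfamily $\F'' \subset \F$ of size exactly $2d$ by adjoining arbitrary further members of $\F$.

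Now the hypothesis applies to the size-$2d$ subfamily $\F''$, so the box $\cap \F''$ has property $P$. As $\F' \subset \F''$ we have $\cap \F'' \subset \cap \F'$, and monotonicity of $P$ upgrades this to the statement that $\cap \F'$ has property $P$. Since $\cap \F = \cap \F'$, we conclude that $\cap \F$ has property $P$, as claimed.

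I expect no genuine obstacle here — the entire geometric content lives in Lemma~\ref{lem:strongHellyBoxes}, and monotonicity of $P$ is invoked only to climb from a size-$2d$ intersection back up to the (possibly smaller) family $\F'$. The one mildly delicate point is exactly this exact-versus-at-most bookkeeping, which is what forces the padding step; had the hypothesis instead been phrased for all subfamilies of size \emph{at most} $2d$, the corollary would follow from the lemma immediately, with no use of monotonicity at all.
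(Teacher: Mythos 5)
Your proof is correct and follows exactly the route the paper intends: the paper gives no separate argument for Corollary~\ref{cor:HellyBoxesP}, merely noting it is a direct consequence of Lemma~\ref{lem:strongHellyBoxes}, and your derivation (apply the lemma, pad $\F'$ to a subfamily of size exactly $2d$, then use monotonicity of $P$ to pass from $\cap\F''$ up to $\cap\F'=\cap\F$) is precisely that intended argument, with the exact-versus-at-most bookkeeping handled carefully.
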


	If $P$ is the property of containing a point from a given finite set, we obtain discrete Helly theorems for boxes proved by Halman \cite{halman} and in more general forms by Edwards and Sober\'on \cite{soberon}.  Proofs of various earlier Helly-type results for boxes were specific to given monotone properties. In this note we present results similar to Lemma \ref{lem:strongHellyBoxes} from which many of these results follow at once.
	
	We call a family $\B$ of boxes \emph{$P$-intersecting} if the intersection of its members has property $P$. In this language Corollary~\ref{cor:HellyBoxesP} states that if every subfamily of size $2d$ is $P$-intersecting, then the whole family is $P$-intersecting.
	
	\subsubsection*{Colourful results}\label{subsec:color}
	
	Let $\F_1, \ldots, \F_{d+1}$ be $d+1$ finite families of convex sets. The Colourful Helly theorem of Lov\'asz (first published by B\'ar\'any \cite{barany1982generalization}) states that if for any choice $C_i \in \F_i$ we have $\cap_{i=1}^{d+1}C_i \neq \emptyset$, then there exists an $i \in [d+1]$ such that $\cap \F_i \neq \emptyset$. As the families are not necessarily distinct, this result generalises Helly's theorem. The Colourful Helly theorem is optimal in the sense that an analogous statement is not true for fewer than $d+1$ families. This can be shown by a construction using hyperplanes in general position.
	
	We show the following strong intersection property of boxes, which generalises Lemma \ref{lem:strongHellyBoxes} and implies a Colourful Helly theorem for monotone properties.

	\begin{theorem}\label{thm:colourfulStrongHelly}
		Let $\mathcal{B}_1,\dots,\mathcal{B}_{2d}$ be finite families of boxes in $\mathbb{R}^d$. Then there is a  selection $B_i\in \mathcal{B}_i$ for each $i\in [2d]$ and an index $\ell \in [2d]$ such that $\cap_{i=1}^{2d}B_i\subset \cap \mathcal{B}_{\ell}$. 
	\end{theorem}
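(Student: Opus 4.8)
The plan is to reduce the colourful statement to the monochromatic Lemma \ref{lem:strongHellyBoxes} by working coordinate by coordinate. Recall the proof idea behind Lemma \ref{lem:strongHellyBoxes}: a box $B$ in $\Re^d$ is $B = \prod_{k=1}^d [a_k(B), b_k(B)]$, and $\cap \F$ is determined by the $d$ numbers $\max_{B\in\F} a_k(B)$ and the $d$ numbers $\min_{B\in\F} b_k(B)$; each of these $2d$ extrema is attained by some box, giving a subfamily $\F'$ of size $\le 2d$ with $\cap\F' = \cap\F$. For the colourful version I want to pick, for each colour class $\B_i$, one box $B_i$ so that the intersection $\cap_i B_i$ is forced to lie inside $\cap \B_\ell$ for at least one $\ell$; the key is that $\cap_i B_i \subset \cap \B_\ell$ fails only if some coordinate slab of $\cap_i B_i$ sticks out past the corresponding slab of $\cap \B_\ell$, and there are only $2d$ such ``directions'' (a lower face or an upper face in each of the $d$ coordinates).

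**The pigeonhole step.** For each colour $i\in[2d]$ and each coordinate $k\in[d]$, let $\alpha_{i,k} = \max_{B\in\B_i} a_k(B)$ and $\beta_{i,k} = \min_{B\in\B_i} b_k(B)$, so that $\cap\B_i = \prod_k [\alpha_{i,k}, \beta_{i,k}]$ (allowing the empty box). Consider the $2d$ quantities attached to colour $i$, namely the $\alpha_{i,k}$'s and the $\beta_{i,k}$'s. I want to assign to colour $i$ one of the $2d$ ``extremal roles'' — one of the $d$ lower coordinates or one of the $d$ upper coordinates — in such a way that the assignment is injective, i.e. a perfect matching between the $2d$ colours and the $2d$ roles. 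Then colour $\ell$ matched to role ``$a_k$'' should contribute the box $B_\ell\in\B_\ell$ attaining $a_k(B_\ell) = \alpha_{\ell,k}$, and symmetrically for upper roles; the point is that then $\cap_i B_i$ already has its $k$-th lower endpoint equal to (at least) $\alpha_{\ell,k}$, which forces $\cap_i B_i$ into the correct half-space, and one checks that this single correct constraint propagates: combined across all colours via the matching, $\cap_i B_i \subset \cap\B_\ell$ for the colour $\ell$ playing, say, a fixed distinguished role. The existence of the right matching is where Hall's theorem (or a direct extremal/minimality argument over all selections) enters.

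**The main obstacle.** The delicate point is that a perfect matching alone does not obviously yield a single $\ell$ that works — each colour only controls one of the $2d$ endpoints of $\cap_i B_i$, so after selecting the $B_i$'s one still has to argue that for at least one $\ell$ \emph{all $2d$ constraints} defining $\cap\B_\ell$ are satisfied by $\cap_i B_i$. I expect the cleanest route is an extremal argument: among all selections $(B_i)_{i\in[2d]}$ choose one minimising the box $\cap_i B_i$ (say, lexicographically among the tuples of its $2d$ endpoints, or minimising $\sum$ of slab lengths), and then show that minimality forces some colour class to be entirely ``absorbed''. Concretely, if for \emph{every} colour $\ell$ some constraint of $\cap\B_\ell$ is violated, then for each $\ell$ one could swap $B_\ell$ for the box in $\B_\ell$ attaining that violated extremum and strictly shrink $\cap_i B_i$ in that direction without enlarging it elsewhere — running this over all colours and using that there are exactly $2d$ colours and $2d$ directions yields a contradiction with minimality (or produces a strictly smaller selection, contradicting the choice). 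Making this swap-and-contradict bookkeeping precise — in particular ensuring the swaps do not interfere destructively and that the ``$2d$ colours vs.\ $2d$ directions'' count is used exactly once — is the technical heart of the argument, and it is essentially a colourful analogue of the counting in Lemma \ref{lem:strongHellyBoxes}. The generalisation to $H$-convex sets should then follow by replacing coordinate slabs with the half-spaces from the fixed set $H$ of normal directions.
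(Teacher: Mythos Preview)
Your setup correctly isolates the structure: a box in $\Re^d$ is cut out by $2d$ coordinate halfspaces, and the game is to match the $2d$ colours to these $2d$ ``directions''. But the extremal/swap argument you sketch has a genuine gap, and it is precisely the step you flag without resolving. When you replace $B_\ell$ by a box $B'_\ell \in \B_\ell$ that is strictly tighter in direction $j(\ell)$, nothing prevents $B'_\ell$ from being \emph{looser} than $B_\ell$ in some other direction $j'$; if $B_\ell$ happened to be the box determining the $j'$-face of $\cap_i B_i$, the swap enlarges the intersection in direction $j'$. Hence a single swap need not shrink $\cap_i B_i$ for containment, for a lexicographic order on the $2d$ endpoints, or for the sum of slab lengths, and performing all $2d$ swaps simultaneously fares no better (direction $j'$ may not be any colour's chosen $j(\ell)$). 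The ``$2d$ colours versus $2d$ directions'' count is the right intuition, but it does not by itself close the bookkeeping, and the proposal does not supply the missing idea. (The Hall-matching paragraph is likewise left hanging --- you note yourself that a matching alone does not single out one $\ell$.)

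The paper's argument sidesteps all of this with a one-pass greedy construction rather than an extremal argument over selections. Write $B <_j B'$ when the $j$-th defining halfspace of $B$ is contained in that of $B'$. Process $j = 1, \ldots, 2d$ in order: at step $j$, look at all boxes in the colour classes not yet used, take a $<_j$-minimal one, and let $\pi(j)$ be its colour (choosing that box as $B_{\pi(j)}$). Set $\ell = \pi(2d)$. For each $j < 2d$ the box $B_{\pi(j)}$ is $<_j$-below every member of $\B_\ell$, since $\B_\ell$ was still in the pool at step $j$; and $B_{\pi(2d)}$ is $<_{2d}$-minimal within $\B_\ell$ itself. Together these give $\cap_i B_i \subseteq \cap \B_\ell$. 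No minimality over selections, no swaps, no Hall --- each ordering is spent on exactly one colour, and the last colour wins automatically.
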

	
	\begin{cor}\label{cor:colourfulHellyP}
		Let $P$ be any monotone property of boxes and let $\mathcal{B}_1,\dots,\mathcal{B}_{2d}$ be finite families of boxes in $\mathbb{R}^d$. If for every choice $B_i\in \mathcal{B}_i$ the family $\{B_1, \ldots, B_{2d}\}$ is $P$-intersecting, then there exists an $\ell \in [2d]$ such that $\B_\ell$ is $P$-intersecting.
	\end{cor}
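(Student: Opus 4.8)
The plan is to derive this corollary directly from Theorem~\ref{thm:colourfulStrongHelly}, in exactly the same way that Corollary~\ref{cor:HellyBoxesP} is derived from Lemma~\ref{lem:strongHellyBoxes}. First I would apply Theorem~\ref{thm:colourfulStrongHelly} to the families $\mathcal{B}_1,\dots,\mathcal{B}_{2d}$, obtaining a selection $B_i\in\mathcal{B}_i$ for each $i\in[2d]$ together with an index $\ell\in[2d]$ such that $\cap_{i=1}^{2d}B_i\subset\cap\mathcal{B}_\ell$. Here I would note that the intersection of finitely many boxes is again a box (possibly the empty or a lower-dimensional degenerate one), so both $\cap_{i=1}^{2d}B_i$ and $\cap\mathcal{B}_\ell$ are objects to which the property $P$ applies.

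Next I would invoke the hypothesis of the corollary: for this particular selection $B_1,\dots,B_{2d}$ the family $\{B_1,\dots,B_{2d}\}$ is $P$-intersecting, which by the definition of $P$-intersecting means that the box $\cap_{i=1}^{2d}B_i$ has property $P$. Since $P$ is monotone and $\cap_{i=1}^{2d}B_i\subset\cap\mathcal{B}_\ell$, it follows that $\cap\mathcal{B}_\ell$ also has property $P$; equivalently, $\mathcal{B}_\ell$ is $P$-intersecting, which is precisely the desired conclusion. The same reasoning goes through verbatim if $\cap_{i=1}^{2d}B_i$ happens to be empty, since then $P$ contains the empty box and monotonicity still yields $\cap\mathcal{B}_\ell\in P$.

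There is no real obstacle here: all the difficulty is concentrated in Theorem~\ref{thm:colourfulStrongHelly}, and once that combinatorial/geometric statement is available the corollary is a one-line application of monotonicity. The only points I would be careful to state explicitly are that intersections of boxes are boxes (so that $P$ is meaningful for them) and that one uses the hypothesis only for the single colourful selection produced by the theorem, not for all selections simultaneously.
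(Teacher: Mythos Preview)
Your proposal is correct and matches the paper's approach exactly: the corollary is presented as an immediate consequence of Theorem~\ref{thm:colourfulStrongHelly}, obtained by applying monotonicity of $P$ to the inclusion $\cap_{i=1}^{2d}B_i\subset\cap\mathcal{B}_\ell$ for the single colourful selection the theorem produces. No separate proof is written out in the paper, and your argument fills in precisely the intended one-line deduction.
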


	The following construction shows that the parameter $2d$ is best possible in Corollary \ref{cor:colourfulHellyP} when $P$ is the property of having volume at least $1$ and all the families are the same, hence best possible in Theorem \ref{thm:colourfulStrongHelly}. Let $\F$ be a family of $2d$ halfspaces whose intersection is a box of volume $\varepsilon < 1$. Intersecting these halfspaces with a large cube, we obtain a family of $2d$ boxes such that the intersection of any $2d-1$ of them has volume at least $1$, but the intersection of all of them has volume $\varepsilon < 1$. 
	Dam\'asdi, F\"oldvári and Naszódi~\cite{damasdi2021colorful} proved a volumetric version of the Colourful Helly theorem for $3d$ families of convex sets. If property $P$ is having volume at least $1$, Corollary \ref{cor:colourfulHellyP} implies a volumetric Colourful Helly theorem for $2d$ families of boxes with volume bound $v(d)=1$. If $P$ is containing at least $n$ points from a given finite set, we obtain a result of Edwards and Sober\'on \cite[Theorem 1.2]{soberon}.

	\subsubsection*{Fractional results}\label{subsec:frac}
	
	The fractional Helly theorem of Katchalski and Liu~\cite{katchalski1979problem} generalises Helly's theorem by showing that if $\F$ is a finite family of convex sets from $\Re^d$ such that for some $\alpha > 0$, at least $\alpha \binom{|\F|}{d+1}$ of the $(d+1)$-tuples of $\F$ have a nonempty intersection, then there is a subfamily $\F' \subset F$ of size $|\F'| \geq \beta(\alpha, d)|\F|$ with nonempty intersection. Variants for general convex sets include a version with respect to containing lattice points by Bárány and Matou\v sek \cite{barany2003fractional},  or having large volume first proved by Sarkar, Xue and Sober\'on \cite{sarkar2021quantitative} for intersecting $\frac{d(d+3)}{2}$-tuples and improved in \cite{frankl2024quantitative} for intersecting $(d+1)$-tuples.
	
	We prove two variants for boxes with respect to monotone properties. The first of these is stronger in the sense that it assumes that a positive fraction of the $d+1$ tuples has property $P$, as opposed to assuming it for $2d$ tuples in the second. However, in the second version $\beta$ is more optimal.
	
	\begin{theorem}\label{thm:FHboxesd+1}
		For every dimension $d$ and every real number $\alpha > 0$ there exists $\beta > 0$ such that the following holds.
		Let $P$ be a monotone property of boxes, and let $\mathcal{B}$ be a finite family of boxes in $\mathbb{R}^d$. If at least $\alpha \binom{|\B|}{d+1}$ of the $(d+1)$-tuples in $\mathcal{B}$ are $P$-intersecting, then there exists a $P$-intersecting subfamily $\mathcal{B}'\subseteq \mathcal{B}$ of size at least $\beta |\B|$.
	\end{theorem}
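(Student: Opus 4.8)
The plan is to work with the abstract simplicial complex $\mathcal N_P=\{T\subseteq\mathcal B:\ \cap T\in P\}$. It is indeed a complex, since $T'\subseteq T$ gives $\cap T'\supseteq\cap T\in P$, hence $\cap T'\in P$; in this language the hypothesis says that $\mathcal N_P$ has at least $\alpha\binom{|\mathcal B|}{d+1}$ faces of size $d+1$, and we want a face of size at least $\beta|\mathcal B|$. Writing $B_i=\prod_{j=1}^d[a_i^{(j)},b_i^{(j)}]$ and, for a box $C=\prod_j[c_j^-,c_j^+]$, setting $S_C=\{B_i:\ B_i\supseteq C\}$, one has $\cap S_C\supseteq C$, so if some $C\in P$ satisfies $|S_C|\ge\beta|\mathcal B|$ we are done. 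Conversely, for every $P$-intersecting $(d+1)$-tuple $T$ the box $C_T:=\cap T$ lies in $P$ and $T\subseteq S_{C_T}$; in fact one checks that every facet of $\mathcal N_P$ is of the form $S_C$ with $C\in P$, and that $S_C\cap S_{C'}=S_{C''}$, where $C''\supseteq C$ is the smallest box containing $C\cup C'$ (so $C''\in P$). Thus it suffices to prove the combinatorial statement: if every box $C\in P$ has $|S_C|<\beta|\mathcal B|$, then $\mathcal N_P$ — a complex whose facets are among the sets $S_C$, $C\in P$, closed under intersection — has fewer than $\alpha\binom{|\mathcal B|}{d+1}$ faces of size $d+1$.

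Two features of the sets $S_C$ should drive the argument. First, $S_C=\bigcap_{j=1}^d\{B_i:\ a_i^{(j)}\le c_j^-\}\cap\bigcap_{j=1}^d\{B_i:\ b_i^{(j)}\ge c_j^+\}$ is an intersection of $2d$ sets drawn from $2d$ nested families; this is precisely the structure behind Lemma~\ref{lem:strongHellyBoxes} and Corollary~\ref{cor:HellyBoxesP}, and treating these $2d$ families independently yields the weaker fractional statement with parameter $2d$. Second — and this is what should improve $2d$ to $d+1$ — the $2d$ families pair up into $d$ coordinates, the two conditions in coordinate $j$ asserting jointly that the $j$-th edge of $B_i$ contains the $j$-th edge of $C$, a one‑dimensional containment. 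The plan is therefore an induction on $d$ that processes one coordinate at a time: after fixing the box realising one endpoint in coordinate $d$ — an operation costing only a linear factor, which is affordable because $\binom{|\mathcal B|}{d+1}/|\mathcal B|$ has order $\binom{|\mathcal B|}{d}$ — coordinate $d$ becomes one‑sided, and the problem is pushed into one lower dimension with the tuple size dropped by one (and the appropriate monotone property inherited). The base case $d=1$ is direct: sort the intervals by left endpoint, take $I_j$ maximising the number of $P$-intersecting pairs $\{I_i,I_j\}$ with $i$ to its left, and observe that $\{I_j\}\cup\{I_i:\ i<j,\ \{I_i,I_j\}\text{ is }P\text{-intersecting}\}$ is $P$-intersecting, since in it $I_j$ realises the largest left endpoint and the smallest right endpoint is realised by one of the other intervals (whose pair with $I_j$ is already known to be $P$-intersecting).

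The main obstacle is tracking the parameters through this induction. One cannot short‑cut it via the standard fact that $d$-collapsible complexes are fractionally Helly with parameter $d+1$, because $\mathcal N_P$ need not be $d$-collapsible: e.g. for $P=$"volume at least $v$" the down-set governing $\mathcal N_P$ (obtained from $P$ via the encoding $B_i\mapsto(a_i^{(1)},-b_i^{(1)},\dots,a_i^{(d)},-b_i^{(d)})\in\Re^{2d}$, under which $\cap T\in P$ becomes "the coordinatewise maximum of the encoded points lies in a fixed down-set") has a continuum of maximal points, so the obvious candidate free faces fail; the structural property of the $S_C$ above must be used instead. Conversely, a naive induction that peels all $d$ coordinates down to singletons breaks, since the resulting one‑tuple statement is false — a long antichain of one‑sided boxes may have all of its $\ge\alpha|\mathcal B|$ singletons in $P$ while no large subfamily is $P$-intersecting. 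The induction must therefore be halted at the right point and combined with the exact strong Helly bound of Lemma~\ref{lem:strongHellyBoxes} and Corollary~\ref{cor:HellyBoxesP}, which controls the regime where many coordinates are still active. Carrying this bookkeeping out is where the real work lies, and it is also why the resulting $\beta$ is far smaller than in the variant that only assumes $P$-intersecting $2d$-tuples.
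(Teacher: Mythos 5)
Your framing (the complex $\mathcal N_P$, the sets $S_C$, the base case $d=1$) is fine, and the base case argument is correct. But the heart of the proof --- the inductive step --- is not actually carried out, and as described it does not work. Measure the state of the induction by the pair $(m,t)$, where $m$ is the number of active halfspace directions and $t$ is the tuple size; the statement you are proving is $(2d,d+1)$, i.e.\ $t=\lfloor m/2\rfloor+1$. Your step ``fix the box realising one endpoint in coordinate $d$, making that coordinate one-sided, and drop the tuple size by one'' is the transition $(m,t)\to(m-1,t-1)$. After one such step you reach $(2d-1,d)$, which still satisfies $t=\lfloor m/2\rfloor+1$ and is a true statement (it is exactly Theorem~\ref{thm:HFHboxesd+1} with $k=d-1$), but after a second step you reach $(2d-2,d-1)$, which violates the invariant and is genuinely false: for $d=3$ this is ``$4$ active halfspaces, a positive fraction of intersecting pairs,'' and two families of pairwise disjoint vertical and horizontal strips in the plane give $\alpha\approx 1/2$ with no $P$-intersecting subfamily of size $3$. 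So the induction cannot be iterated in the form you describe; you acknowledge this (``the induction must be halted at the right point and combined with Lemma~\ref{lem:strongHellyBoxes}\dots carrying this bookkeeping out is where the real work lies''), but that combination is precisely the missing content, and Corollary~\ref{cor:HellyBoxesP} does not bridge the gap from $(2d-1,d)$ downward. The step that would preserve the invariant is $(m,t)\to(m-2,t-1)$: in a $P$-intersecting $(d+1)$-tuple the $2d$ extremes are distributed among $d+1$ boxes, so by pigeonhole some box realises at least \emph{two} of them, and one must fix a box together with an (arbitrary, not necessarily same-coordinate) \emph{pair} of extremes it realises, at a pigeonhole cost of $n\binom{2d}{2}$ groups; this forces you out of the ``one lower dimension'' picture and into the general $H$-convex setting with an arbitrary subset of $2d-2$ halfspaces, plus the induced monotone property $Q(C)=(C\cap B_0\in P)$ on the restricted family. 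None of this is in your write-up, so as it stands the proof has a hole exactly where the improvement from $2d$ to $d+1$ is supposed to happen.

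For comparison, the paper avoids induction on coordinates entirely: it applies the Erd\H{o}s--Simonovits supersaturation lemma to the $(d+1)$-uniform hypergraph of $P$-intersecting tuples to extract many complete multipartite configurations, then uses a ``weak colourful Helly'' theorem (proved by repeatedly halving the colour classes until all pairs of classes are ordered consistently in each of the $2d$ orderings $<_i$) to convert each such configuration into a $P$-intersecting $(2d+1)$-tuple, and finally invokes the easy $2d$-type fractional theorem. Your correct observation that the $2d$ conditions defining $S_C$ come from $2d$ nested families is the same structural fact the paper exploits via the orderings $<_i$, but the paper's route through supersaturation sidesteps the delicate parameter bookkeeping that your inductive plan would require.
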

	
	If $P$ is the property of containing a point from a fixed set, Theorem \ref{thm:FHboxesd+1} implies Theorem 1.3 of \cite{soberon} with a shorter proof.
	For the same property, Theorem 3.2 of \cite{soberon} is a fractional result with parameter $2d$. Although the bound for $\beta$ in our Theorem \ref{thm:FHboxes2d} is slightly weaker, it still converges to $1$ if $\alpha$ does so. 
	
	\begin{theorem}\label{thm:FHboxes2d}
		For every dimension $d$ and every real number $\alpha > 0$ there exists $\beta > 0$ such that the following holds.
		Let $P$ be a monotone property of boxes, and let $\mathcal{B}$ be a finite family of boxes in $\mathbb{R}^d$ such that at least $\alpha\binom{|\mathcal{B}|}{2d}$ of the $2d$-tuples of $\mathcal{B}$ are $P$-intersecting. Then there is a $P$-intersecting subfamily $\mathcal{B'}\subseteq \mathcal{B}$ of cardinality at least $\beta(\alpha, k) |\mathcal{B}|$. For large enough $|\B|$, we can choose $\beta=1-2d(1-\alpha)^{1/(2d+1)}$.
	\end{theorem}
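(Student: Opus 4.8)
The plan is to reduce the statement to a clean combinatorial estimate about random subsets, exploiting the structure behind Lemma~\ref{lem:strongHellyBoxes}. Write each box as $B=\prod_{j=1}^d[a_j(B),b_j(B)]$, so that $B$ is cut out by $2d$ halfspaces indexed by \emph{slots} $s\in[2d]$ (the slots ``$x_j\le b_j$'' and ``$x_j\ge a_j$'', $j\in[d]$), and for any subfamily $\mathcal G$ the box $\cap\mathcal G$ has, in each coordinate, the tightest upper and lower bound occurring in $\mathcal G$. For each slot $s$ fix a linear order on $\mathcal B$ by decreasing restrictiveness in slot $s$, breaking ties arbitrarily, and let $\mathrm{rk}_s(B)$ be the rank of $B$ in this order. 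The elementary observation driving the proof is: if $S$ is a $P$-intersecting $2d$-tuple, and for each slot $s$ we take $C_s\in S$ with $\mathrm{rk}_s(C_s)$ minimal and delete from $\mathcal B$ all boxes $B$ with $\mathrm{rk}_s(B)<\mathrm{rk}_s(C_s)$, then the surviving family $\mathcal B'$ satisfies $\cap\mathcal B'\supseteq\cap_s C_s\supseteq\cap S$, so $\cap\mathcal B'$ has property $P$ by monotonicity, while the number of deleted boxes is at most $\sum_{s=1}^{2d}\bigl(\mathrm{rk}_s(C_s)-1\bigr)$.

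Set $n=|\mathcal B|$ and $\gamma=(1-\alpha)^{1/(2d+1)}$, and for each slot $s$ let $T_s$ be the set of the $g:=\lfloor\gamma n\rfloor+1$ most restrictive boxes in slot $s$. By the observation above it suffices to produce a $2d$-subset $S$ that is $P$-intersecting and meets every $T_s$: then each $C_s\in S\cap T_s$ has $\mathrm{rk}_s(C_s)\le g$, at most $2d(g-1)\le 2d\gamma n$ boxes get deleted, and $|\mathcal B'|\ge(1-2d\gamma)n=\beta n$. Since at least $\alpha\binom n{2d}$ of the $2d$-subsets are $P$-intersecting, it is enough to show that strictly more than $(1-\alpha)\binom n{2d}$ of the $2d$-subsets meet all of $T_1,\dots,T_{2d}$. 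For this I would reveal the elements $x_1,\dots,x_{2d}$ of a uniformly random $2d$-subset one at a time and bound from below the probability of $\{x_i\in T_i\text{ for all }i\}$: given the first $i-1$ revealed elements, the $i$-th is uniform over the remaining $n-i+1$ boxes and at least $g-i+1$ of them lie in $T_i$, so this probability is at least $\prod_{i=1}^{2d}\frac{g-i+1}{n-i+1}=\binom g{2d}/\binom n{2d}$. With $\alpha$ (hence $\gamma$) fixed and $n\to\infty$ each of the $2d$ factors tends to $\gamma$, so this ratio tends to $\gamma^{2d}$, which exceeds $\gamma^{2d+1}=1-\alpha$; hence for all sufficiently large $n$ the number of $2d$-subsets meeting every $T_s$ is more than $(1-\alpha)\binom n{2d}$, and some such subset is $P$-intersecting.

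This yields the explicit bound whenever $1-2d(1-\alpha)^{1/(2d+1)}>0$, i.e.\ for $\alpha$ close to $1$; for the remaining (qualitative) part of the statement I would instead invoke Theorem~\ref{thm:FHboxesd+1}, observing that by monotonicity every $(d+1)$-subtuple of a $P$-intersecting $2d$-tuple is $P$-intersecting, so a routine double count gives at least $\alpha\binom n{d+1}$ $P$-intersecting $(d+1)$-tuples. (The small-$n$ case needs nothing new: one box of any $P$-intersecting $2d$-tuple already forms a $P$-intersecting subfamily.) I expect the main obstacle to be precisely the transversal estimate in the middle paragraph: one needs the lower bound $\binom g{2d}/\binom n{2d}\approx\gamma^{2d}$ to hold \emph{uniformly} over all configurations of the sets $T_s$, which may overlap unpredictably (a box most restrictive in one slot can be least restrictive in another), and it is exactly the slack between $\gamma^{2d}$ and $\gamma^{2d+1}$ that absorbs this worst-case behaviour together with the floor in the definition of $g$ --- this is where the exponent $2d+1$ in the theorem originates.
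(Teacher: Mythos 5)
Your main argument is essentially the paper's proof of its $H$-convex generalisation (Theorem~\ref{thm:HFHboxes2d}) specialised to boxes: order $\mathcal{B}$ by restrictiveness in each of the $2d$ defining halfspaces, take the roughly $(1-\alpha)^{1/(2d+1)}|\mathcal{B}|$ most restrictive boxes in each slot, note that the number of transversal $2d$-tuples exceeds $(1-\alpha)\binom{|\mathcal{B}|}{2d}$ for large $|\mathcal{B}|$ so some transversal is $P$-intersecting, and conclude by monotonicity that all boxes outside these prefixes form a $P$-intersecting family of size at least $(1-2d(1-\alpha)^{1/(2d+1)})|\mathcal{B}|$; your sequential-revealing probability bound is just the paper's count $\binom{\gamma n}{2d}>(1-\alpha)\binom{n}{2d}$ in disguise, and your rank-based deletion is equivalent to the paper's deletion of the whole prefixes. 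The one point where you go beyond the paper is the fallback to Theorem~\ref{thm:FHboxesd+1} when $1-2d(1-\alpha)^{1/(2d+1)}\le 0$ (the paper's proof only supplies the explicit bound, treating small $|\mathcal{B}|$ trivially); this is a legitimate observation, but be aware that within this paper Theorem~\ref{thm:FHboxesd+1} is itself deduced (via Theorems~\ref{thm:HFHboxesd+1} and~\ref{thm:HFHboxes2d}) from the $H$-convex form of the statement you are proving, so your patch is not an independent argument for the small-$\alpha$ regime.
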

	
	Unlike in the case of Helly's theorem, the parameter $(d+1)$ in the fractional version is optimal for boxes for a general $\alpha$. Indeed, let $\mathcal{F}_i$ be a family of $\lfloor n/d \rfloor $ or $\lceil n/d \rceil$ hyperplanes orthogonal to the $i$-th coordinate axis. Intersecting the union of these families with a large cube, we obtain a family of $n$ boxes where $d+1$ cannot be lowered for $\alpha=\binom{n}{d}/(n/d)^d$. By using thick hyperplanes, we obtain a construction with proper $d$-dimensional boxes. However, Katchalski \cite{katchalski1980boxes} showed that when $\alpha$ is sufficiently large, we can decrease $d+1$ to $2$. Going beyond this, Eckhoff \cite{eckhoff}, confirming a conjecture of Kalai \cite{kalai1984intersection}, determined for each $1\leq k \leq d+1$ the smallest $\alpha$ for which if $\alpha \binom{|\F|}{k}$ many $k$-tuples of members of a family of boxes $\mathcal{F}$ intersect, then there is a subset $\F'\subseteq \F$ with $|\F'|\geq \beta(\alpha,d) |\F|$ such that $\cap \F'\neq \emptyset$. As a first step towards generalising Eckhoff's result to monotone properties, we prove an analogue of Katchalki's result.

	\begin{theorem}\label{thm:WFH2}
		For every dimension $d$ there exists a $c_d > 0$ such that for every $\alpha \in (1-c_d, 1]$ there exists a $\beta > 0$ such that the following holds. Let $P$ be a monotone property of boxes, and let $\mathcal{B}$ be a finite family of boxes in $\mathbb{R}^d$. If at least $\alpha \binom{|\B|}{2}$ of the pairs in $\mathcal{B}$ are $P$-intersecting, then there exists a $P$-intersecting subfamily $\mathcal{B}'\subseteq \mathcal{B}$ of size at least $\beta |\B|$.
	\end{theorem}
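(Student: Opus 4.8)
\emph{Proof strategy.}
The plan is to reduce to the case where almost all boxes share a common point, and then to exhibit a single $P$-intersecting pair whose intersection is contained in a linear fraction of the family; monotonicity of $P$ finishes the argument. Write $n=|\B|$ and $\varepsilon=1-\alpha$, so at most $\varepsilon\binom n2$ pairs of $\B$ fail to be $P$-intersecting; the constant $c_d$ will be chosen small enough, depending only on $d$, that all of the estimates below have room to spare. For each coordinate $j$, at most $\varepsilon\binom n2$ pairs of boxes have disjoint $j$-th projections, since such a pair is not $P$-intersecting. The disjointness graph of the intervals $\{I_j(B):B\in\B\}$ on the line contains no induced $2K_2$ (two disjoint intervals whose pair is ``incomparable'' to a second disjoint pair, with all remaining pairs meeting, would be the forbidden $2+2$ configuration of an interval order). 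A graph with no induced $2K_2$ and at most $\varepsilon\binom n2$ edges has a maximum matching of size $\nu=O(\sqrt\varepsilon\,n)$: the matched vertices induce a $2K_2$-free graph carrying a perfect matching, and every one of the $\binom\nu2$ pairs of matching edges must be joined by a further edge. Hence all but $O(\sqrt\varepsilon\,n)$ of the intervals $I_j(B)$ pairwise intersect, so by Helly's theorem on the line they share a point $p_j$; intersecting over $j$, at least $\bigl(1-O(\sqrt\varepsilon)\bigr)n$ boxes contain $p=(p_1,\dots,p_d)$. Let $\B_0$ be this subfamily: every two of its boxes meet, and it still has at most $\varepsilon\binom n2$ non-$P$-intersecting pairs.

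Encode each $B\in\B_0$ by the vector $v(B)\in\Re_{\ge0}^{2d}$ of distances from $p$ to the facets of $B$. Then for $\mathcal S\subseteq\B_0$ the box $\bigcap\mathcal S$ has arm vector $\bigwedge_{B\in\mathcal S}v(B)$ (coordinatewise minimum), it is determined by at most $2d$ members of $\mathcal S$ by Lemma~\ref{lem:strongHellyBoxes}, and $\mathcal S$ is $P$-intersecting exactly when this minimum lies in the up-set $\widetilde P\subseteq\Re_{\ge0}^{2d}$ of arm vectors of boxes with property~$P$, an up-set that contains $v(B)\wedge v(B')$ for every $P$-intersecting pair $\{B,B'\}$. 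The crux is to find a $P$-intersecting pair $\{B,B'\}\subseteq\B_0$ with $B\cap B'\subseteq B_l$ for at least $\beta n$ indices $l$. Granting this, set $Q=B\cap B'$ and $\mathcal S=\{l:Q\subseteq B_l\}$; then $Q\in P$ and $Q\subseteq\bigcap\mathcal S$, so $\mathcal S$ is $P$-intersecting, and $|\mathcal S|\ge\beta n$, which is the conclusion.

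To produce such a pair I would partition $\B_0$ into boundedly many classes by a ``profile'': in each of the $2d$ directions, first cap the arm length at a high quantile (affecting only an $O(1/K)$-fraction of $\B_0$ per direction) and then record which of $K$ quantile bins it falls into, for a large constant $K=K(d)$. With at most $K^{2d}$ profiles, some class $\mathcal C$ has $\ge n/K^{2d}$ boxes; since it carries at most $\varepsilon\binom n2$ non-$P$-intersecting pairs and $\varepsilon<c_d$, it contains a $P$-intersecting pair $\{B,B'\}$. Two boxes of $\mathcal C$ have arm vectors in one grid cell, so $B\cap B'$ has arm vector between that cell's lower and upper corners; because those corners are quantile thresholds, $B\cap B'$ is contained in all but an $O(1/K)$-fraction of $\B_0$ in each of the $2d$ directions, hence in at least $\beta n$ boxes for a suitable $\beta=\beta(\alpha,d)>0$.

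The main obstacle is precisely this last point. The popular profile class $\mathcal C$ may occupy a \emph{high} bin in some directions, and then the intersection of a pair drawn from inside $\mathcal C$ is too large to be contained in many boxes, so the count $|\{l:B\cap B'\subseteq B_l\}|$ collapses. Resolving this should require choosing the profile class (or directly the witness box $Q$) more carefully — for instance first deleting the few boxes that are extreme in some direction, or noting that a direction in which most arms are already long imposes no constraint and may be dropped — together with a careful accounting of these quantile thresholds against the density $1-\varepsilon$ of $P$-intersecting pairs; this balancing act is where I expect the real work to be.
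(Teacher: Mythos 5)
Your opening reduction is sound (the disjointness graph of intervals is indeed $2K_2$-free, so a small matching bound lets you pass to a subfamily $\B_0$ sharing a point $p$), and the arm-vector encoding is a reasonable reformulation. But the crux --- producing a $P$-intersecting pair $\{B,B'\}$ with $B\cap B'\subseteq B_l$ for at least $\beta n$ indices $l$ --- is exactly where the proposal stops, and the quantile-class device does not deliver it. Already for $d=1$, take $B_i=[-i,\,n-i]$ for $i\in[n]$, $p=0$, and let $P$ be ``length at least $(1-\gamma)n$'': then $B_i\cap B_{i'}\subseteq B_l$ if and only if $\min(i,i')\le l\le\max(i,i')$, so a $P$-intersecting pair drawn from inside a single profile class (where $|i-i'|$ may equal $1$) has its intersection contained in as few as \emph{two} members of $\B_0$, regardless of how the bins are chosen; the pairs that do witness the theorem are the spread-out ones with $|i-i'|\approx\gamma n$, which live in different classes. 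So the obstacle you flag in your last paragraph is not a presentational issue but a genuine failure of the selection rule. Worse, the pair-based plan is structurally too weak for $d\ge 2$: by Lemma~\ref{lem:strongHellyBoxes} the intersection of the large subfamily you seek is determined by $2d$ of its members, i.e.\ the natural witness is a $2d$-tuple realizing the coordinatewise minimum of the arm vectors in all $2d$ directions, and two boxes cannot in general realize all $2d$ minima simultaneously, so no single pair's intersection need be contained in linearly many boxes at all.

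What is missing is a bridge from $P$-intersecting \emph{pairs} to $P$-intersecting \emph{$2d$-tuples}, and the paper supplies it combinatorially rather than metrically. Order the boxes by each of the $2d$ facet coordinates; iterating the Erd\H{o}s--Szekeres theorem shows that every $N(d)$-element subset of $\B$ contains a $2d$-tuple that is monotone with respect to all $2d$ orderings simultaneously, so a constant fraction $c_d$ of all $2d$-tuples are such ``consistently ordered'' chains. For a consistent chain $B_1<_1\cdots<_1 B_m$ one has $\cap_i B_i=B_1\cap B_m$, so the entire chain is $P$-intersecting as soon as the single pair $\{B_1,B_m\}$ is. Hence for $\alpha>1-c_d$ a positive fraction of the consistent chains contain no bad pair, giving a positive fraction of $P$-intersecting $2d$-tuples, and Theorem~\ref{thm:FHboxes2d} (whose proof is precisely the ``witness contained in all but the first $\gamma n$ boxes in each ordering'' argument you are reaching for) finishes. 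If you want to salvage your route, you will end up proving this chain lemma in some form; replacing the quantile classes by the $2d$ orderings directly is the cleaner path.
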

	
	We note that in Katchalski's result the values $c_d$ and $\beta$ are best possible, whereas we don't have any reason to think that they are optimal in our result.
	
	During the preparation of our manuscript, a new work of Eom, Kim and Lee appeared where they prove Theorem~\ref{thm:WFH2} independently in the case when $P$ is the property that a box contains an element of a fixed finite sets $S \subset \Re^d$ \cite{eom2025fractionaldiscretehellypairs}.
	
	\subsubsection*{A $(p,q)$-theorem}
	The celebrated Alon-Kleitman $(p,q)$-theorem \cite{alon1992piercing} says that for every $p \geq q \geq d+1$ there exists $c = c(p,q,d)$ such that the following holds. If $\F$ is a finite family of convex sets in $\Re^d$ such that among any $p$ sets, some $q$ have a point in common, then there is a set $S \subset \Re^d$ of size at most $c$ such that for every $C \in \F$ we have $S \cap C \neq \emptyset$. Theorem~\ref{thm:FHboxesd+1} implies an analogue of the celebrated $(p,q)$-theorem for monotone properties of boxes.
	
	\begin{theorem}\label{thm:pqboxes}
		For every $p \geq q \geq d+1$ there exists $c = c(p,q,d)$ such that the following holds. Let $P$ be a monotone property and $\B$ be a finite family of boxes in $\Re^d$. If among every $p$ box, some $q$ are $P$-intersecting, then there exists a family $\mathcal{S}$ of boxes with property $P$ such that $|\mathcal{S}| \leq c$ and for every $B \in \B$ there is a $D \in \mathcal{S}$ with $D \subset B$.
	\end{theorem}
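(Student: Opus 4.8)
The plan is to recognise Theorem~\ref{thm:pqboxes} as a bounded‑transversal statement for an auxiliary set system and then run the Alon--Kleitman reduction of $(p,q)$‑theorems to fractional Helly theorems, with Theorem~\ref{thm:FHboxesd+1} supplying the fractional Helly input. Let $\M_0$ be the set of all boxes in $\Re^d$ having property $P$, and for $B\in\B$ put $R_B=\{D\in\M_0:D\subseteq B\}$. Monotonicity of $P$ shows, in both directions, that a subfamily $\{B_i:i\in T\}$ of $\B$ is $P$‑intersecting if and only if $\bigcap_{i\in T}R_{B_i}\neq\emptyset$: if the left‑hand side holds then the box $\bigcap_{i\in T}B_i$ itself lies in every $R_{B_i}$, while if $D\in\bigcap_{i\in T}R_{B_i}$ then $D\subseteq\bigcap_{i\in T}B_i$ forces $\bigcap_{i\in T}B_i\in P$. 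In particular a family $\mathcal S$ of boxes with property $P$ such that every $B\in\B$ contains some $D\in\mathcal S$ is exactly a transversal of the range system $\{R_B:B\in\B\}$ consisting of ground‑set elements, so it suffices to bound $\tau(\{R_B:B\in\B\})$. (Here one assumes, as one must for such an $\mathcal S$ to exist at all, that every box of $\B$ contains a box with property $P$, i.e.\ every $R_B\neq\emptyset$; and when $|\B|<p$ the bound is trivial, so we assume $|\B|\ge p$.)

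The first step is to reduce the $(p,q)$‑hypothesis to the hypothesis of a fractional Helly theorem. Since $P$ is monotone and $q\ge d+1$, every $(d+1)$‑subtuple of a $P$‑intersecting $q$‑tuple is $P$‑intersecting, so the $(p,q)$‑condition implies that among every $p$ members of any subfamily of $\B$, some $d+1$ are $P$‑intersecting. A standard hypergraph counting estimate then gives a positive constant $\alpha=\alpha(p,d)$ such that in every subfamily of $\B$ at least an $\alpha$‑fraction of the $(d+1)$‑tuples is $P$‑intersecting; by Theorem~\ref{thm:FHboxesd+1} there is a positive constant $\beta=\beta(p,d)$ such that every subfamily of $\B$ contains a $P$‑intersecting subfamily of relative size at least $\beta$, equivalently some $D\in\M_0$ lying in at least a $\beta$‑fraction of the sets $R_B$. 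A routine sampling (or clearing‑denominators) argument upgrades this to a weighted statement: for every probability measure $\mu$ on $\B$ there is a $D\in\M_0$ with $\mu(\{B\in\B:D\in R_B\})\ge\beta$.

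The second step is the Alon--Kleitman argument itself, now purely about the range system $\{R_B\}$. By linear‑programming duality $\tau^*(\{R_B\})=\nu^*(\{R_B\})$; an optimal fractional matching rescaled to a probability measure $\mu$ on $\B$ satisfies $\mu(\{B:D\in R_B\})\le 1/\nu^*$ for every $D\in\M_0$, whereas the weighted statement above produces a $D$ for which this quantity is $\ge\beta$, so $\nu^*\le 1/\beta=:c_1(p,q,d)$. To turn $\tau^*\le c_1$ into an integral bound one uses $\varepsilon$‑nets, and the key structural point is that $\{R_B\}$ has bounded VC dimension: identifying a box $[a_1,b_1]\times\dots\times[a_d,b_d]$ with the point $(-a_1,b_1,\dots,-a_d,b_d)\in\Re^{2d}$ turns containment into the coordinatewise order, so each $R_B$ is the restriction to $\M_0$ of a lower orthant $\{y\le z\}$ of $\Re^{2d}$, and such orthants have VC dimension at most $2d$. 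Normalising an optimal (finitely supported) fractional transversal to a probability measure $\nu$ on $\M_0$ with $\nu(R_B)\ge 1/c_1$ for every $B$, the $\varepsilon$‑net theorem of Haussler and Welzl with $\varepsilon=1/c_1$ yields a set $\mathcal S\subseteq\operatorname{supp}(\nu)\subseteq\M_0$ of size $O(d\,c_1\log c_1)=:c(p,q,d)$ meeting every $R_B$. Since $\mathcal S$ lies in the ground set $\M_0$ it consists of boxes with property $P$, and meeting $R_B$ means exactly that some $D\in\mathcal S$ has $D\subseteq B$, which is the assertion of the theorem.

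I expect the genuine content to be the set‑up and the VC‑dimension observation rather than a single hard step. Realising the conclusion as a transversal problem and checking that ``$P$‑intersecting'' matches ``common element of the $R_B$'s'' in both directions is where monotonicity of $P$ is used essentially; and it is the bounded VC dimension of the orthant‑ranges that makes the $\varepsilon$‑net \emph{automatically} a family of boxes with property $P$, so that no separate rounding step is needed and one never has to resort to weak $\varepsilon$‑nets, whose points need not lie in the ground set. Everything else is the standard Alon--Kleitman machinery (see also Alon--Kalai--Matou\v sek--Meshulam); the one auxiliary item requiring a short argument beyond citing known results is the weighted form of Theorem~\ref{thm:FHboxesd+1}, which should nevertheless be routine.
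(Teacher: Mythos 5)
Your proposal is correct, and at its core it is the same strategy as the paper's: both realise the conclusion as a transversal problem for exactly the same auxiliary set system (ground set the boxes with property $P$, ranges $R_B=\{D\in P: D\subseteq B\}$, with monotonicity giving the equivalence between ``$P$-intersecting'' and ``the $R_B$'s have a common element''), and both feed Theorem~\ref{thm:FHboxesd+1} into the Alon--Kleitman reduction from $(p,q)$-conditions to fractional Helly. Where you diverge is in how the machinery is executed. The paper gives only an outline and cites the abstract framework of Alon, Kalai, Matou\v sek and Meshulam \cite{alon2002transversal}, whose Theorems~8 and~9 derive a $(p,q)$-theorem from the fractional Helly property of a set system alone; in that route no VC-dimension or $\varepsilon$-net input is needed, since the net step is carried out abstractly from fractional Helly. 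You instead run the original Alon--Kleitman scheme explicitly: LP duality to get $\tau^*=\nu^*\le 1/\beta$, then the Haussler--Welzl $\varepsilon$-net theorem, using the pleasant observation that under the parametrisation of boxes by points of $\Re^{2d}$ the ranges $R_B$ become lower orthants and hence have VC dimension at most $2d$, so the net automatically consists of boxes with property $P$ and no rounding or weak-net step is required. Your route buys an explicit bound $c=O\bigl(d\,\beta^{-1}\log\beta^{-1}\bigr)$, at the cost of two small points you should spell out: the multiset form of Theorem~\ref{thm:FHboxesd+1} used after clearing denominators (harmless here, since the proof of Theorem~\ref{thm:HFHboxesd+1} only uses the orderings $<_i$ and tolerates repeated members with ties broken arbitrarily), and the existence of a finitely supported optimal fractional transversal, which one gets by restricting the ground set to the intersections $\cap\B'$ over $P$-intersecting subfamilies $\B'\subseteq\B$. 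The paper's citation-based route is shorter and dimension-free in its bookkeeping, but yields no explicit bound on $c$.
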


	\subsubsection*{$H$-convex sets}
	
	Let $H=\{H_1,\dots,H_k\}$ be a set of $k$ closed halfspaces in $\mathbb{R}^d$. An \emph{$H$-convex set} $B$ in $\mathbb{R}^d$ is the intersection of $k$ halpfspaces $H_1^B,\dots,H_{k}^B$ such that for each $i\in [k]$ the halfspace $H_i^B$ is a translate of $H_i$. The notion of $H$-convex sets was introduced by Boltyanski \cite{Bolt}, and their Helly-type properties have been studied in \cite{boltyanskii1976helly, boltyanski2003minkowski, soberon}.\footnote{In the cited papers the notion of $H$-convex sets also requires that the normal vectors of the hyperplanes bounding $H_1,\dots,H_k$ are not contained in a closed hemisphere. However, our proofs work without this assumption too.}

	Boxes are special families of $H$-convex sets, where each halfspace is a coordinate-halfspace. Our results for boxes can be stated and proved essentially in the same way for $H$-convex sets as well, and they directly imply the results about boxes We will state and prove these more general results in the next section. Theorem\ref{thm:colourfulStrongHelly} follows from Theorem \ref{thm:HcolourfulStrongHelly}, Theorem \ref{thm:FHboxesd+1} from Theorem \ref{thm:HFHboxesd+1}, Theorem \ref{thm:FHboxes2d} from Theorem \ref{thm:HFHboxes2d}, Theorem \ref{thm:WFH2} from Theorem \ref{thm:HWFH2}, and Theorem \ref{thm:pqboxes} from Theorem \ref{thm:Hpqboxes}.

	\vspace{20pt}
	
	\textbf{Acknowledgement:} NF was partially supported by ERC Advanced Grant no. 882971 “GeoScape” at the beginning of this work. AJ was supported by the ERC Advanced Grant ``ERMiD'', by the EXCELLENCE-24 project no.~151504 of the NRDI Fund, and by the NKFIH grants TKP2021-NKTA-62, FK132060 and SNN135643.

	\section{Results for $H$-convex sets and their proofs}\label{sec:Hconvex}
	
	The generalisations of our results to $H$-convex sets illustrate well that they are more combinatorial than geometric, as they depend only on the number of defining halfspaces, and not on the dimension of the space. The generalisation of Theorem \ref{thm:colourfulStrongHelly} to $H$-convex sets is as follows.
	
	\begin{theorem}\label{thm:HcolourfulStrongHelly}
		Let $H$ be a set of $k$ closed halfspaces, and let $\mathcal{B}_1,\dots,\mathcal{B}_{k}$ be finite families of $H$-convex sets in $\mathbb{R}^d$. Then there is a  selection $B_i\in \mathcal{B}_i$ for each $i\in [k]$ and an index $\ell \in [k]$ such that $\cap_{i=1}^{k}B_i\subset \cap \mathcal{B}_{\ell}$. 
	\end{theorem}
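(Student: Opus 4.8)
The plan is to reduce the statement to a purely combinatorial fact about real $k\times k$ matrices and prove that fact by a short induction. First I would fix coordinates: after translating each bounding hyperplane to pass through the origin, write $H_i=\{x\in\Re^d:\langle x,u_i\rangle\le 0\}$, so that every $H$-convex set $B$ can be written as $B=\bigcap_{i\in[k]}\{x:\langle x,u_i\rangle\le h_i(B)\}$ for some reals $h_1(B),\dots,h_k(B)$. Since the families are finite, for each $j\in[k]$ the quantity $m_{ij}:=\min_{B\in\mathcal{B}_j}h_i(B)$ is attained, and $\bigcap\mathcal{B}_j=\bigcap_{i\in[k]}\{x:\langle x,u_i\rangle\le m_{ij}\}$. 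This produces a matrix $M=(m_{ij})\in\Re^{k\times k}$, with rows indexed by the halfspaces and columns by the families.

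The combinatorial claim I would isolate is: for every $M\in\Re^{k\times k}$ there exist $\ell\in[k]$ and a permutation $\phi$ of $[k]$ such that $m_{i\phi(i)}\le m_{i\ell}$ for every $i\in[k]$. Granting this, for each $i\in[k]$ choose $C_i\in\mathcal{B}_{\phi(i)}$ with $h_i(C_i)=m_{i\phi(i)}$ and declare $B_{\phi(i)}:=C_i$; as $\phi$ is a bijection this picks exactly one set from each family. Then for every halfspace index $i$ we have $\min_{j\in[k]}h_i(B_j)\le h_i(B_{\phi(i)})=m_{i\phi(i)}\le m_{i\ell}$, so $\bigcap_{j\in[k]}B_j=\bigcap_{i\in[k]}\{x:\langle x,u_i\rangle\le\min_j h_i(B_j)\}\subseteq\bigcap_{i\in[k]}\{x:\langle x,u_i\rangle\le m_{i\ell}\}=\bigcap\mathcal{B}_\ell$, which is exactly the desired conclusion. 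Note that this uses only the trivial monotonicity ``enlarging the $h_i$'s enlarges the intersection'', so no general-position or hemisphere assumption on $H$ is needed.

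To prove the matrix claim I would induct on $k$; the case $k=1$ is immediate. For $k\ge 2$, pick an entry $m_{i_0j_0}$ that is minimal over the whole matrix, and apply the induction hypothesis to the $(k-1)\times(k-1)$ matrix obtained by deleting row $i_0$ and column $j_0$. This gives an index $\ell\in[k]\setminus\{j_0\}$ and a bijection $\phi'\colon[k]\setminus\{i_0\}\to[k]\setminus\{j_0\}$ with $m_{i\phi'(i)}\le m_{i\ell}$ for all $i\ne i_0$. Extend $\phi'$ to a permutation $\phi$ of $[k]$ by $\phi(i_0):=j_0$. For $i\ne i_0$ the inequality is inherited, and for $i=i_0$ it reads $m_{i_0j_0}\le m_{i_0\ell}$, which holds because $m_{i_0j_0}$ is the global minimum of $M$. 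This completes the induction and hence the proof.

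I expect the reduction to be routine bookkeeping; the single load-bearing point is the combinatorial claim together with the observation that the row and column of a globally minimal entry can always be peeled off — a globally minimal entry dominates from below the corresponding entry of every column, so the extended permutation still works. I do not foresee a genuine obstacle once this peeling idea is in hand; the only things to watch are that the families are finite (so the minima $m_{ij}$ are attained) and that $\phi$ must be a single permutation serving all $k$ halfspace directions simultaneously, which is precisely what the induction delivers.
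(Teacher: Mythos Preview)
Your proof is correct and is essentially the same greedy argument as the paper's: both assign each halfspace direction to a family so that the last unassigned family serves as $\ell$. The paper runs the greedy directly---at step $i$ it picks a $<_i$-minimal set among the sets in the still-unassigned families and declares its family to be $\mathcal{B}_{\pi(i)}$, taking $\ell=\pi(k)$---whereas you first extract the matrix $(m_{ij})$ and prove the equivalent combinatorial lemma by an induction that peels off a globally minimal entry; the two peeling orders differ only cosmetically.
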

	
	Our proofs rely on introducing multiple orderings on $H$-convex sets. Let $H=\{H_1,\dots,H_k\}$ be a set of $k$ closed halfspaces in $\mathbb{R}^d$. We define $k$ orderings $<_1,\dots, <_{k}$ on the set of $H$-convex sets as follows. For two $H$-convex sets $B_1,B_2$ we set $B_1 <_i B_2$ if $H_i^{B_1}\subseteq H_i^{B_2}$. Using these orderings, the proof of Theorem \ref{thm:HcolourfulStrongHelly} is very short.

	\begin{proof}[Proof of Theorem~\ref{thm:HcolourfulStrongHelly}]
		Sequentially define a permutation $\pi\in S_{k}$ such that $\mathcal{B}_{\pi(i)}$ contains a minimal element $B_{\pi(i)}$ of $\cup_{j=1}^n\mathcal{B}_j\setminus (\cup_{j=1}^{i-1}\mathcal{B}_{\pi(j)})$ according to $<_i$. Then $\cap_{i=1}^{k}B_{\pi(i)}\subseteq \cap\mathcal{B}_{\pi(k)}$, so we can choose $\{B_{\pi(1)},\ldots,B_{\pi(k)}\}$ and $\ell=\pi(k)$.
	\end{proof}
	
	Similarly to boxes, we call a property $P$ of $H$-convex sets a \emph{monotone property} if $B \in P$ and $B \subset C$ implies $C \in P$. A family $\B$ of $H$-convex sets is \emph{$P$-intersecting} if the intersection of its members has property $P$. Of Theorem \ref{thm:FHboxesd+1} and Theorem \ref{thm:FHboxes2d} we generalise and prove Theorem \ref{thm:FHboxes2d} first, as its proof is shorter.
	
	\begin{theorem}\label{thm:HFHboxes2d}
		For every integer $k$ and every real number $\alpha > 0$ there exists $\beta > 0$ such that the following holds.
		Let $H$ be a set of $k$ closed halfspaces in $\mathbb{R}^d$, let $P$ be a monotone property of $H$-convex sets, and let $\mathcal{B}$ be a finite family of $H$-convex sets in $\mathbb{R}^d$ such that at least $\alpha\binom{|\mathcal{B}|}{k}$ of the $k$-tuples of $\mathcal{B}$ are $P$-intersecting. Then there is a $P$-intersecting subfamily $\mathcal{B'}\subseteq \mathcal{B}$ of cardinality at least $\beta|\mathcal{B}|$. For large enough $|\B|$, we can choose $\beta=1-k(1-\alpha)^{1/(k+1)}$.
	\end{theorem}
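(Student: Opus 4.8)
The plan is to combine the $k$ orderings $<_1,\dots,<_k$ with the structural fact underlying Theorem~\ref{thm:HcolourfulStrongHelly}: for any finite family $\mathcal{C}$ of $H$-convex sets, $\cap\mathcal{C}=\bigcap_{i=1}^k H_i^{C_i}$, where $C_i$ is a $<_i$-minimal element of $\mathcal{C}$, because the intersection of translates of the single halfspace $H_i$ is the deepest one. Consequently $\cap\mathcal{C}=\cap\{C_1,\dots,C_k\}$, so $\mathcal{C}$ is $P$-intersecting if and only if the subcollection $\{C_1,\dots,C_k\}$ is. Moreover, since $P$ is monotone, being $P$-intersecting is downward closed on tuples: a subtuple of a $P$-intersecting tuple is $P$-intersecting, its intersection being a superset. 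Hence at most $(1-\alpha)\binom{|\mathcal{B}|}{k}$ of the $k$-tuples of $\mathcal{B}$ fail to be $P$-intersecting.

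The key reduction is as follows. Fix any $P$-intersecting $k$-tuple $T\subseteq\mathcal{B}$, let $a_i$ be a $<_i$-minimal element of $T$, and delete from $\mathcal{B}$ every set that is strictly $<_i$-below $a_i$ for some $i$; write $\mathcal{B}'$ for what remains. Each $a_i$ survives and is a $<_i$-minimum of $\mathcal{B}'$ (nothing $<_i$-below it is left, and $a_i\not<_j a_j$ for any $j$ since $a_i\in T$), so $\cap\mathcal{B}'=\bigcap_i H_i^{a_i}=\cap T$, which has property $P$; thus $\mathcal{B}'$ is $P$-intersecting. Fixing arbitrary tie-breaks to turn each $<_i$ into a linear order with rank function $r_i$, we deleted at most $\sum_{i=1}^k\bigl(r_i(a_i)-1\bigr)=\phi(T)-k$ sets, where $\phi(T):=\sum_{i=1}^k\min_{B\in T}r_i(B)$. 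So it suffices to exhibit a $P$-intersecting $k$-tuple $T$ with $\phi(T)$ small; then $|\mathcal{B}'|\ge|\mathcal{B}|-\phi(T)+k$.

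To find such a $T$, set $n=|\mathcal{B}|$, $\delta=(1-\alpha)^{1/(k+1)}$, $m=\lfloor\delta n\rfloor$, and let $R_i$ be the $m$ sets of smallest $<_i$-rank. Any $k$-tuple that can be written as $\{b_1,\dots,b_k\}$ with the $b_i$ distinct and $b_i\in R_i$ satisfies $\phi(T)\le km\le k\delta n$. Choosing $b_1\in R_1$, then $b_2\in R_2\setminus\{b_1\}$, and so on, produces at least $m(m-1)\cdots(m-k+1)$ such ordered tuples, hence at least $\binom{m}{k}$ unordered $k$-tuples with $\phi\le k\delta n$. Since $\delta^{k}=(1-\alpha)^{k/(k+1)}>1-\alpha$, we have $\binom{m}{k}\big/\binom nk\to\delta^k$ as $n\to\infty$, so for $|\mathcal{B}|$ large enough $\binom{m}{k}>(1-\alpha)\binom{n}{k}\ge\#\{k\text{-tuples that are not }P\text{-intersecting}\}$. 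Pigeonhole then gives a $P$-intersecting $T$ with $\phi(T)\le k\delta n$, and therefore a $P$-intersecting $\mathcal{B}'$ with $|\mathcal{B}'|\ge n-k\delta n=\bigl(1-k(1-\alpha)^{1/(k+1)}\bigr)n$, as required. For the remaining (small) values of $|\mathcal{B}|$, and hence for a uniform positive $\beta=\beta(\alpha,k)$, one uses that a single $P$-intersecting $k$-tuple already forms a $P$-intersecting subfamily of size $k$.

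The main obstacle is the counting step: one needs a lower bound on the number of $k$-tuples with small $\phi$ strong enough to beat $(1-\alpha)\binom nk$, and the rainbow selection count above suffices precisely because $\delta^k$ strictly exceeds $1-\alpha$, leaving room to absorb lower-order error terms once $|\mathcal{B}|$ is large. The remaining points — that deleting the sets strictly below $a_1,\dots,a_k$ leaves a family with intersection exactly $\cap T$, and the bookkeeping of how many sets were removed — are routine given the description of $\cap\mathcal{C}$ through the orderings $<_i$.
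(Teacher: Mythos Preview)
Your argument is correct and is essentially the same as the paper's: set $\delta=(1-\alpha)^{1/(k+1)}$, take the bottom $\lfloor\delta n\rfloor$ elements in each ordering $<_i$, count rainbow $k$-tuples to get at least $\binom{\lfloor\delta n\rfloor}{k}>(1-\alpha)\binom{n}{k}$ for large $n$, extract a $P$-intersecting rainbow tuple, and conclude that the $\ge(1-k\delta)n$ sets lying above it in every ordering form a $P$-intersecting subfamily. The only cosmetic difference is that the paper deletes the fixed bottom blocks $\mathcal{A}_i$ outright, whereas you delete only what lies strictly below the specific minima $a_i$ of the chosen tuple; both give the same bound.
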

	
	\begin{proof}[Proof of Theorem~\ref{thm:HFHboxes2d}] We may assume that $|\B|$ is large enough compared to $\alpha$ and $k$, otherwise we can choose $\beta = \frac{1}{|\B|}$. Let $\gamma=(1-\alpha)^{1/(k+1)}$. For each $i$  let $\mathcal{A}_i\subseteq \mathcal{B}$ be the set of the first $\gamma|\mathcal{B}|$ members of $\B$ according to $<_i$. Then $|\{\{A_1,\dots,A_{2d}\}: A_i\in \mathcal{A}_i\}\}|\geq \frac{1}{k!}|A_1|\cdot (|A_2|-1)\dots (|A_{k}|-k)=\binom{\gamma |\B|}{k} > (1-\alpha) \binom{|\B|}{k}$ as $|\B|$ is large enough. Thus by assumption, there is an $H$-convex set $A$ in $\{\cap_{i=1}^{k} A_i: A_i\in \mathcal{A}_i\}$ that has property $P$. By the properties of the orderings, $A\subseteq B$ for any $B\in \mathcal{B}':=\mathcal{B}\setminus (\cup_{i=1}^{k}\mathcal{A}_i)$, and $|\mathcal{B}'|\geq (1-k\gamma)|\mathcal{B}|=(1-k(1-\alpha)^{1/(k+1)})|\mathcal{B}|$.
	\end{proof}

	The next result is a generalisation of Theorem~\ref{thm:FHboxesd+1}. Note that although we only state it in the case when the number of halfspaces in $H$ is odd ($2k+1$), the even case with $2k$ halfspaces and $k+1$ families of $H$-convex sets follows from directly this.
	
	\begin{theorem}\label{thm:HFHboxesd+1}
		For every integer $k$ and every real number $\alpha > 0$ there exists $\beta > 0$ such that the following holds.
		Let $H$ be a set of $2k+1$ closed halfspaces, let $P$ be a monotone property of $H$-convex sets, and let $\mathcal{B}$ be a finite family of $H$-convex sets in $\mathbb{R}^d$. If at least $\alpha \binom{|\B|}{k+1}$ of the $k+1$-tuples in $\mathcal{B}$ are $P$-intersecting, then there exists a $P$-intersecting subfamily $\mathcal{B}'\subseteq \mathcal{B}$ of size at least $\beta |\B|$.
	\end{theorem}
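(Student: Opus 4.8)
The plan is to push the strategy behind the proof of Theorem~\ref{thm:HFHboxes2d}, working with the $2k+1$ orderings $<_1,\dots,<_{2k+1}$ on $H$-convex sets. The key fact is that for any subfamily $\mathcal C\subseteq\mathcal B$ one has $\cap\mathcal C=\bigcap_{i=1}^{2k+1}H_i^{m_i(\mathcal C)}$, where $m_i(\mathcal C)$ is the $<_i$-minimal member of $\mathcal C$. Consequently a subfamily $\mathcal C$ is $P$-intersecting whenever $\bigcap_{i=1}^{2k+1}H_i^{x_i}$ has property $P$ for some tuple $(x_1,\dots,x_{2k+1})$ of members of $\mathcal B$ with $x_i\le_i m_i(\mathcal C)$ for every $i$. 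In particular, if $\mathcal F=\{B_1,\dots,B_{k+1}\}$ is a $P$-intersecting $(k+1)$-tuple and we put $x_i:=m_i(\mathcal F)$, then the ``inert set''
\[
\mathrm{inrt}(\mathcal F):=\{B\in\mathcal B:\ x_i\le_i B\text{ for all }i\}=\mathcal B\setminus\bigcup_{i=1}^{2k+1}\{B\in\mathcal B:\ B<_i x_i\}
\]
together with $\mathcal F$ is again $P$-intersecting, since its intersection equals $\cap\mathcal F$. So it suffices to produce a single $P$-intersecting $(k+1)$-tuple $\mathcal F$ with $|\mathrm{inrt}(\mathcal F)|\ge\beta|\mathcal B|$.

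To find such a tuple I would use the $\ge\alpha\binom{|\mathcal B|}{k+1}$ $P$-intersecting $(k+1)$-tuples. The governing heuristic -- already visible in the sharpness example, where $\mathcal B$ consists of $2k+1$ clusters of near-parallel slabs and a $(k+1)$-tuple using one slab from each of $k+1$ of the clusters has inert set of size $\tfrac{k+1}{2k+1}|\mathcal B|$ -- is that a $P$-intersecting $(k+1)$-tuple whose members are \emph{spread}, each member being the $<_i$-minimum in about two of the orderings, has a linear-size inert set, because then each set $\{B:B<_i x_i\}$ is empty or small. The plan is to show, by a counting argument over the $P$-intersecting $(k+1)$-tuples -- first discarding, as in Theorem~\ref{thm:HFHboxes2d}, a $\gamma|\mathcal B|$-sized bottom segment of each ordering in a suitably chosen block, so as to force the surviving $P$-intersecting tuples to be spread -- that such a tuple exists, with $\beta$ quantitative in $\alpha$ and $k$.

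The step I expect to be the real obstacle is making this last point rigorous, i.e.\ honestly bounding $|\mathcal B\setminus\mathrm{inrt}(\mathcal F)|$. The crude estimate $|\mathcal B\setminus\mathrm{inrt}(\mathcal F)|\le\sum_{i=1}^{2k+1}|\{B:B<_i x_i\}|$ -- which is exactly what makes the $k$-orderings-versus-$k$-tuples argument of Theorem~\ref{thm:HFHboxes2d} go through -- loses a factor $2k+1$ here, and a $(k+1)$-tuple cannot in general be arranged to sit near the bottom of all $2k+1$ orderings unless several of its members are simultaneously low in several orderings. One must therefore bound the \emph{union} $\bigcup_i\{B:B<_i x_i\}$ directly, exploiting that the $<_i$ are not arbitrary linear orders but are induced by $2k+1$ halfspaces and hence correlated; quantifying this correlation -- presumably through an $f$-vector or Kruskal--Katona-type shadow inequality for the simplicial complex of $P$-intersecting subfamilies relative to the $2k+1$ orderings -- is where the passage from $2k+1$ to $k+1$ has to originate, and is the heart of the matter.
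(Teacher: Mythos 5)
Your setup is correct and is the same mechanism that drives the paper's Theorem~\ref{thm:HFHboxes2d}: the identity $\cap\mathcal{C}=\bigcap_{i=1}^{2k+1}H_i^{m_i(\mathcal{C})}$, and the fact that a $P$-intersecting $(k+1)$-tuple $\mathcal{F}$ together with its inert set is again $P$-intersecting, so it would suffice to find one $P$-intersecting $(k+1)$-tuple whose inert set has size at least $\beta|\mathcal{B}|$. But the proposal stops exactly where the proof has to start: you never establish that such a tuple exists. The ``spread tuple'' heuristic is not backed by any counting argument, and you yourself identify bounding $\bigl|\bigcup_{i=1}^{2k+1}\{B: B<_i x_i\}\bigr|$ as the unresolved heart of the matter. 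The difficulty is real: the scheme of Theorem~\ref{thm:HFHboxes2d} works because the tuple size matches the number of orderings, so a transversal of the $2k+1$ bottom segments is itself one of the tuples assumed to be $P$-intersecting; with $(k+1)$-tuples against $2k+1$ orderings this matching disappears, and the Kruskal--Katona/shadow-type inequality you gesture at is neither formulated nor shown to yield the needed statement. As written, this is a plan with its central step missing, not a proof.

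The paper closes precisely this gap by a different route. It forms the $(k+1)$-uniform hypergraph of $P$-intersecting $(k+1)$-tuples and applies Erd\H{o}s--Simonovits supersaturation (Lemma~\ref{lem:supersat}) to extract $\gamma|\mathcal{B}|^{(k+1)t}$ copies of $K_{k+1}(t)$ with $t=2^{k(2k+1)+1}(2k+1)$. A ``weak colourful Helly'' theorem (Theorem~\ref{thm:HWCHB}), proved by iterating Claim~\ref{cla:Hstructure} until all pairs of the $k+1$ colour classes are consistently ordered and then pigeonholing the $2k+1$ orderings against the $k+1$ classes, turns each copy of $K_{k+1}(t)$ into a $P$-intersecting $(2k+1)$-tuple; a multiplicity count then gives at least $\alpha'\binom{|\mathcal{B}|}{2k+1}$ such tuples, and Theorem~\ref{thm:HFHboxes2d} (which is exactly your inert-set argument, but in the balanced regime of $(2k+1)$-tuples) finishes. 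In other words, the passage from $2k+1$ down to $k+1$ is bought by supersaturation plus the colourful pigeonhole, not by a direct shadow-type estimate; to complete your approach you would need to supply an argument of comparable strength at the step you left open.
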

	
	In the related result of \cite{soberon} (Theorem 5.5) a positive fraction of intersecting $d+1$-tuples is needed, regardless of the number of halfspaces in $H$. Thus, our result is stronger for a different range of parameters.
	
	The framework of the proof of Theorem~\ref{thm:HFHboxesd+1} is based on ideas from \cite{barany2003fractional} and uses a supersaturation result of Erd\H os and Simonovits. The drawback of this widely used method is that the bound it gives on $\beta$ is very weak. 
	Let $K_r(t)$ be the complete $r$-partite $r$-uniform hypergraph with vertex classes of size $t$.
	
	\begin{lemma}[Erd\H os, Simonovits \cite{erdHos1983supersaturated}]\label{lem:supersat}
		Let $r,t$ be positive integers. For every $\alpha> 0$ there exists $\gamma(\alpha, r,t)>0$ such that the following holds. If $G$ is a $r$-uniform hypergraph on $n$ vertices with at least $\alpha\binom{n}{r}$ edges, then $G$ contains at least $\gamma n^{rt}$ copies of $K_r(t)$.
	\end{lemma}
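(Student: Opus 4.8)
The plan is to deduce the lemma from the classical (but genuinely nontrivial) fact that $K_r(t)$-free $r$-uniform hypergraphs are sparse, i.e.\ $\mathrm{ex}(N,K_r(t))=o(N^{r})$ as $N\to\infty$ (the hypergraph K\H{o}v\'ari--S\'os--Tur\'an bound of Erd\H os, which even gives $\mathrm{ex}(N,K_r(t))=O(N^{\,r-t^{-(r-1)}})$), and then to upgrade ``one copy'' to ``$\Omega(n^{rt})$ copies'' by a standard sampling argument. First fix an integer $m=m(\alpha,r,t)$ large enough that $\mathrm{ex}(m,K_r(t))<\tfrac{\alpha}{2}\binom{m}{r}$; such $m$ exists since $\tfrac{\alpha}{2}\binom{m}{r}=\Theta(m^{r})$. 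It suffices to prove the bound for $n\ge m$, the only relevant regime. Double counting the pairs $(e,M)$ with $e\in E(G)$, $M$ an $m$-subset of $V(G)$ and $e\subseteq M$ gives $\sum_{M}|E(G[M])|=|E(G)|\binom{n-r}{m-r}\ge\alpha\binom{n}{r}\binom{n-r}{m-r}=\alpha\binom{m}{r}\binom{n}{m}$, so the average of $|E(G[M])|$ over the $\binom{n}{m}$ sets $M$ is at least $\alpha\binom{m}{r}$. Since always $|E(G[M])|\le\binom{m}{r}$, a one-line averaging argument yields at least $\tfrac{\alpha}{2}\binom{n}{m}$ \emph{rich} sets $M$, meaning $|E(G[M])|\ge\tfrac{\alpha}{2}\binom{m}{r}>\mathrm{ex}(m,K_r(t))$; each such $G[M]$ therefore contains a copy of $K_r(t)$.

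To conclude, double count the pairs $(\mathcal{K},M)$ where $\mathcal{K}$ is a copy of $K_r(t)$ in $G$ and $M\supseteq V(\mathcal{K})$ is an $m$-set. Every rich $M$ contributes at least one pair, so there are at least $\tfrac{\alpha}{2}\binom{n}{m}$ pairs; on the other hand a fixed $\mathcal{K}$ has exactly $rt$ vertices and hence lies in $\binom{n-rt}{m-rt}$ sets $M$. Thus the number of copies of $K_r(t)$ in $G$ is at least $\tfrac{\alpha}{2}\binom{n}{m}\big/\binom{n-rt}{m-rt}=\tfrac{\alpha}{2}\binom{n}{rt}\big/\binom{m}{rt}\ge\gamma\,n^{rt}$ for a suitable $\gamma=\gamma(\alpha,r,t)>0$ (using $\binom{n}{rt}\ge(n/2)^{rt}/(rt)!$ for $n\ge 2rt$). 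Everything in this deduction is routine manipulation of binomial coefficients; the one substantial ingredient — and the step I expect to be the real obstacle — is the extremal bound $\mathrm{ex}(m,K_r(t))=o(m^{r})$, whose proof is the delicate Erd\H os counting generalising K\H{o}v\'ari--S\'os--Tur\'an.

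If one insists on a self-contained proof, the lemma can instead be proved directly by induction on $r$. For $r=2$ it is the familiar double application of convexity of $x\mapsto\binom{x}{t}$: first $\sum_v\binom{d(v)}{t}\ge n\binom{2|E(G)|/n}{t}=\Omega(n^{t+1})$, which counts pairs (vertex, $t$-subset of its neighbourhood) $=\sum_{|S|=t}\binom{\mathrm{codeg}(S)}{t}$, whence a second use of convexity gives $\Omega(n^{2t})$ copies of $K_{t,t}$. For the inductive step one passes to the vertex links $L_v=\{S:\ S\cup\{v\}\in E(G)\}$, which are $(r-1)$-uniform of average density $\ge\alpha$; applies the induction hypothesis to a positive fraction of the dense links to obtain $\Omega(n^{(r-1)t})$ copies of $K_{r-1}(t)$ in each, then for each fixed choice of $r-1$ parts uses convexity once more on the number of vertices $v$ completing it, picking $t$ of them as the $r$-th part. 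The only care needed here is to keep the $r$ parts pairwise disjoint and to insert the averaging steps in the right places; there is no conceptual difficulty.
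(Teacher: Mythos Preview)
The paper does not prove this lemma at all: it is quoted as a black box from Erd\H{o}s and Simonovits \cite{erdHos1983supersaturated}, so there is nothing to compare against. Your argument is correct and is in fact precisely the standard Erd\H{o}s--Simonovits supersaturation trick from that reference: fix a constant window size $m$ above the Tur\'an threshold, average over $m$-subsets to find a positive fraction of them that are dense enough to contain a copy of $K_r(t)$, and then double count. The binomial identity $\binom{n}{m}/\binom{n-rt}{m-rt}=\binom{n}{rt}/\binom{m}{rt}$ and the one-sided averaging step are both fine. Your alternative inductive sketch via links is also a known route and would work with the bookkeeping you indicate, but it is not needed here.
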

	
	The second ingredient is a variant of Theorem~\ref{thm:HcolourfulStrongHelly}, which is stronger in the sense that the number of families is significantly smaller, however the price of this is a somewhat weaker conclusion. 
	
	\begin{theorem}\label{thm:HWCHB}[Weak Colourful Helly for $H$-convex sets]
		Let $H$ be a set of $2k+1$ closed halfspaces and let $\mathcal{B}_1,\dots,\mathcal{B}_{k+1}$ be finite families of $H$-convex sets in $\mathbb{R}^d$ with $|\mathcal{B}_1|=\dots=|\mathcal{B}_{k+1}|\geq 2^{k(2k+1)+1}$. Then there is a selection $B_i\in \mathcal{B}_i$ for each $i\in[k+1]$ and an index $\ell \in [k+1]$  such that there is a subset $\mathcal{B}'_{\ell}\subseteq \mathcal{B}_{\ell}$ with $|\mathcal{B}'_{\ell}|\geq 2^{-k(2k+1)-1}|\mathcal{B}_{\ell}|$ and $\cap_{i=1}^{k+1}B_i\subset \cap \mathcal{B'}_{\ell}$.
	\end{theorem}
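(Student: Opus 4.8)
The plan is to prove the statement by induction on $k$, the inductive step reducing the case of $2k+1$ halfspaces and $k+1$ families to the case of $2k-1$ halfspaces and $k$ families at the expense of shrinking the surviving subfamily by a factor $2^{4k-1}$. Since $k(2k+1)=(k-1)(2k-1)+(4k-1)$, iterating this down to the trivial base $k=0$ (one halfspace, one family: take $B_1$ to be the $<_1$-minimal member and $\mathcal{B}'_1=\mathcal{B}_1$) produces the required factor $2^{-k(2k+1)-1}$. Before starting I would recast the conclusion in the language of the orderings $<_1,\dots,<_{2k+1}$, exactly as in the proof of Theorem~\ref{thm:HcolourfulStrongHelly}: for any selection $B_1,\dots,B_{k+1}$ and any $H$-convex set $C$ one has $\cap_{i=1}^{k+1}B_i\subseteq C$ as soon as, for every $j\in[2k+1]$, there is an index $i$ with $B_i\le_j C$ (that is, $H_j^{B_i}\subseteq H_j^C$), because then the $j$-th defining halfspace of $\cap_i B_i$ is contained in $H_j^C$ for every $j$. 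Hence it suffices to find a selection, an index $\ell$, and a subfamily $\mathcal{B}'_\ell\subseteq\mathcal{B}_\ell$ with $|\mathcal{B}'_\ell|\ge 2^{-k(2k+1)-1}|\mathcal{B}_\ell|$ such that for every $C\in\mathcal{B}'_\ell$ and every $j$ some selected $B_i$ satisfies $B_i\le_j C$.

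For the inductive step one must ``dispose of'' two halfspaces and one family. One halfspace, say $H_{2k}$, comes for free: among all $k+1$ families pick the one whose $<_{2k}$-minimum is globally smallest, make that minimum element its representative, and consume that family; this representative is $\le_{2k} C$ for every conceivable $C$, so $H_{2k}$ is taken care of permanently. The remaining task is the halfspace $H_{2k+1}$, to be handled using only the surviving $k$ families (consuming a second family would leave too few for the induction hypothesis). The difficulty is that in general no single element of any family is simultaneously ``small'' with respect to $<_{2k+1}$ relative to all the other families, so $H_{2k+1}$ cannot be disposed of without loss. The idea is to pass to large subfamilies before recursing: exploiting that there are $2k+1>k+1$ orderings, a pigeonhole/averaging argument over the orderings and the families produces one family, one further ordering, and an element of that family that lies low enough, relative to every other family, that after deleting from each of the $k$ families the members lying below it a $2^{-(4k-1)}$-fraction survives. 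One then applies the induction hypothesis to these restricted families together with the $2k-1$ orderings $<_1,\dots,<_{2k-1}$, appends the element just found and the free representative to the returned selection, and checks the $\le_j$-condition ordering by ordering.

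I expect the last point to be the main obstacle, and also the reason the surviving fraction is so small. A naive ``bottom-$\varepsilon$'' pigeonhole over the $2k+1$ orderings does not close: forcing two of the bottom sets to meet requires $\varepsilon>\tfrac1{2k+1}$, while leaving room after removing two ``below'' sets requires $\varepsilon<\tfrac1{2(k+1)}$, and these are incompatible. So the argument has to use the finer interaction of the orderings — for instance an inclusion--exclusion within a single family over all $2k+1$ orderings to locate a pair of orderings on which that family is positively correlated, combined with a case analysis according to which family wins each relevant $<_j$-minimum contest — and it is precisely this detour that makes the per-step loss grow like $2^{\Theta(k)}$ rather than stay bounded, producing the factor $2^{-k(2k+1)-1}$ in Theorem~\ref{thm:HWCHB}.
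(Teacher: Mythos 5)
Your outline has a genuine gap at exactly the point you flag yourself: the inductive step is never actually carried out. Disposing of $H_{2k}$ by taking the globally smallest $<_{2k}$-minimum is fine, but the treatment of $H_{2k+1}$ is only a wish: you concede that the natural ``bottom-$\varepsilon$'' pigeonhole over the orderings does not close (the two requirements on $\varepsilon$ you derive are indeed incompatible), and the proposed repair --- ``finer interaction of the orderings'', inclusion--exclusion, a case analysis --- is not specified in any checkable form. There is also a structural problem with the bookkeeping: the element you find to dominate $H_{2k+1}$ from below must come from one of the $k+1$ families, yet your final selection is supposed to consist of the free representative, this element, and the $k$ elements returned by the induction hypothesis --- that is $k+2$ selected sets from $k+1$ families, so either the found element must be forced into the consumed family (which your averaging argument does not guarantee) or it collides with the representative the induction chooses from its own family. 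As it stands the argument does not constitute a proof, even granting the correct arithmetic $k(2k+1)=(k-1)(2k-1)+(4k-1)$.

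The paper avoids this difficulty without any induction. Using Claim~\ref{cla:Hstructure}, one repeatedly halves the families so that every pair $(\mathcal{B}'_i,\mathcal{B}'_j)$ becomes ordered consistently under every one of the $2k+1$ orderings; each family is involved in $k$ pairs and $2k+1$ orderings, which is where the factor $2^{-k(2k+1)}$ (plus one initial halving) comes from. After this preprocessing, for each ordering $<_h$ there is an entire family all of whose members lie below all members of the other families --- precisely the uniform smallness whose absence blocks your approach. Then a single pigeonhole ($2k+1$ orderings versus $k+1$ families) yields a family $\mathcal{B}'_\ell$ that is the bottom family for at most one ordering; taking its minimum in that ordering and arbitrary representatives elsewhere gives $\cap_i B_i\subseteq\cap\mathcal{B}'_\ell$, exactly as in the proof of Theorem~\ref{thm:HcolourfulStrongHelly}. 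If you want to salvage your inductive scheme, the missing ingredient is some analogue of this consistent-ordering step; without it, the per-step disposal of the second halfspace is unsupported.
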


	We say that two ordered pairs of $H$-convex sets $(B_1,B_2)$ and $(B'_1,B'_2)$ are \emph{ordered consistently}, if for each $i\in [k]$, the relation $B_1 <_i B_2$ holds if and only if $B'_1 <_i B'_2$. An ordered set $\B = (B_1, \ldots, B_{\ell})$ of $H$-convex sets is ordered consistently, if for any $i < j$ and $i' < j'$ the ordered pairs $(B_i, B_j)$ and $(B_{i'}, B_{j'})$ are ordered consistently. An ordered pair $(\F_1, \F_2)$ of families of $H$-convex sets is ordered consistently, if for any $B_1, B_1' \in \F_1$, $B_2, B_2' \in \F_2$ the pairs $(B_1, B_2), (B_1', B_2')$ are ordered consistently.

	\begin{claim}\label{cla:Hstructure}Let $H$ be a set of $k$ closed halfspaces and let $\mathcal{B}_1$ and $\mathcal{B}_2$ be two families of $H$-convex sets in $\mathbb{R}^d$ with $|\B_1|=m$, $|\B_2|=n$. Then for any $h\in [k]$ there exist $\mathcal{B}'_1\subseteq \mathcal{B}_1$ and $\mathcal{B}'_2\subseteq\mathcal{B}_2$ with $|\mathcal{B}_1'| = \lfloor\frac{m}{2}\rfloor$, $|\mathcal{B}_2'|=\lfloor\frac{n}{2}\rfloor$ such that we either have $B_1 <_{h} B_2$ for all $B_1\in \mathcal{B}'_1, B_2\in \mathcal{B}'_2$, or we have $B_2 <_{h} B_1$ for all $B_1\in \mathcal{B}'_1, B_2\in \mathcal{B}'_2$.
	\end{claim}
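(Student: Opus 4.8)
The plan is to reduce the claim to a one-dimensional statement about real numbers, exploiting that each ordering $<_h$ is governed by a single scalar. Fix $h\in[k]$ and write the halfspace $H_h$ as $\{x\in\Re^d:\langle x,u\rangle\le c\}$ for a suitable normal vector $u$. Every translate of $H_h$ then has the form $\{x:\langle x,u\rangle\le t\}$ for some $t\in\Re$, so to each $H$-convex set $B$ we attach the real number $t_h(B)$ with $H_h^B=\{x:\langle x,u\rangle\le t_h(B)\}$. The key observation is that $B_1<_h B_2$, i.e.\ $H_h^{B_1}\subseteq H_h^{B_2}$, holds if and only if $t_h(B_1)\le t_h(B_2)$. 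Thus on $\mathcal{B}_1\cup\mathcal{B}_2$ the preorder $<_h$ is exactly the order induced by the values $t_h(\cdot)$, and the claim becomes: given $m$ reals (the $t_h$-values of the members of $\mathcal{B}_1$) and $n$ reals (those of $\mathcal{B}_2$), one can choose $\lfloor m/2\rfloor$ of the first list and $\lfloor n/2\rfloor$ of the second so that every chosen value from one selection is $\le$ every chosen value from the other.

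For the scalar statement I would argue with medians. Let $a$ be the $\lfloor m/2\rfloor$-th smallest of the $t_h$-values of $\mathcal{B}_1$ and $b$ the $\lfloor n/2\rfloor$-th smallest of those of $\mathcal{B}_2$. Then $\mathcal{B}_1$ has at least $\lfloor m/2\rfloor$ members with $t_h$-value $\le a$ and at least $m-\lfloor m/2\rfloor+1\ge\lfloor m/2\rfloor$ members with $t_h$-value $\ge a$ (since $2\lfloor m/2\rfloor\le m$), and symmetrically for $\mathcal{B}_2$ with respect to $b$. If $a\le b$, let $\mathcal{B}'_1$ be any $\lfloor m/2\rfloor$ members of $\mathcal{B}_1$ with value $\le a$ and $\mathcal{B}'_2$ any $\lfloor n/2\rfloor$ members of $\mathcal{B}_2$ with value $\ge b$; then for all $B_1\in\mathcal{B}'_1$, $B_2\in\mathcal{B}'_2$ we get $t_h(B_1)\le a\le b\le t_h(B_2)$, hence $B_1<_h B_2$. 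If instead $a>b$, we perform the mirror-image selection, taking $\mathcal{B}'_2$ from the members of $\mathcal{B}_2$ with value $\le b$ and $\mathcal{B}'_1$ from the members of $\mathcal{B}_1$ with value $\ge a$, which yields $B_2<_h B_1$ for all chosen pairs. In either case the prescribed cardinalities $\lfloor m/2\rfloor$ and $\lfloor n/2\rfloor$ are attained, possibly after discarding surplus elements.

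I do not anticipate a real obstacle; the argument is essentially a pigeonhole on one coordinate. The only points requiring a little care are checking that translating the halfspace turns $<_h$ into a scalar inequality in the correct direction (so that both ``directions'' of the dichotomy are genuinely available), and the bookkeeping with floors and possible ties among the $t_h$-values, which is why I phrase the median estimates as ``at least $\lfloor m/2\rfloor$ members on each side'' rather than with exact counts and then trim to the required size.
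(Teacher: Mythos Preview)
Your proof is correct and follows essentially the same approach as the paper: both arguments exploit that $<_h$ is a linear (pre)order and perform a one-dimensional median/pigeonhole split. The only cosmetic difference is that the paper cuts the union $\mathcal{B}_1\cup\mathcal{B}_2$ at its combined median and observes that one of the two families must contribute at least half of each side, whereas you take the two medians separately and compare them.
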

	
	\begin{proof}Let $X\subseteq \mathcal{B}_1\cup \mathcal{B}_2$ be the first $\lfloor\frac{m+n}{2}\rfloor$ members of $\mathcal{B}_1\cup \mathcal{B}_2$ according to $<_{h}$, and let $Y=(\B_1\cup \B_2)\setminus X$. We either have $|X\cap \B_1|\geq \lfloor\frac{m}{2}\rfloor$ and $|Y\cap \B_2|\geq \lfloor\frac{n}{2}\rfloor$, or $|X\cap \B_2|\geq \lfloor\frac{m}{2}\rfloor$ and $|Y\cap \B_1|\geq \lfloor\frac{n}{2}\rfloor$. In the first case $\B'_1\subset X\cap \B_1$ and $\B'_2 \subset Y\cap \B_2$, in the second case $\B'_1\subset Y\cap \B_1$, $\B'_2\subset X\cap \B_2$ of sizes $\lfloor\frac{m}{2}\rfloor$ and $\lfloor\frac{n}{2}\rfloor$ have the desired property.
	\end{proof}

	\begin{proof}[Proof of Theorem \ref{thm:HWCHB}]
		By discarding at most half of the elements from each $\B_i$, we may assume that the size of each of the families is a power of $2$ with an exponent at least $k(2k+1)$. Applying Claim \ref{cla:Hstructure} successively to each pair $\mathcal{B}_i,\mathcal{B}_j$ and each relation $<_{h}$, we obtain subfamilies $\mathcal{B}'_1\subseteq \mathcal{B}_1,\dots,\mathcal{B}'_{k+1}\subseteq \mathcal{B}_{k+1}$ with $|\mathcal{B}'_i|\geq 2^{-k(2k+1)}|\B_i|$ such that each pair $(\B_i',\B_j')$ is ordered consistently.
		By discarding the excess, we may assume that $|\mathcal{B}'_i|= 2^{-k(2k+1)}|\B_i|$.
		
		Let $\B'=\cup_{i=1}^{k+1}\B_i'$. For each $h\in [2k+1]$ there is an $i$ such that $\B'_i$ consists of exactly the first $\frac{1}{k+1}|\B'|$ members of $\B'$ according to $<_h$. Since there are $2k+1$ orderings and $k+1$ families, there is an $\ell\in [k+1]$ such that $\B_{\ell}$ is the family consisting of the first $\frac{1}{k+1}|\B'|$ members of $\B'$ according to at most one ordering, say $<_1$. Let $B_{\ell}\in \B_{\ell}$ be the minimal member of $\B_{\ell}$ according to $<_1$, and for all $i\neq \ell$ let $B_{i}\in \B_{i}$ be an arbitrary element. Since for all $i\in [2k+1]$, $i\neq 1$ there is a $j$ such that all members of $\B_j$ are smaller than all members of $\B_{\ell}$ according to $<_i$, and $B_{\ell}$ is the minimal element of $\B_{\ell}$ according to $<_1$, we have $\cap_{i=1}^{k+1}B_i\subset \cap \mathcal{B}_{\ell}$.
	\end{proof}
	
	\begin{proof}[Proof of Theorem~\ref{thm:HFHboxesd+1}]
		Let $G$ be the $(k+1)$-uniform hypergraph with vertex set $\B$ where hyperedges correspond to $P$-intersecting $(k+1)$-tuples of $H$-convex sets. Let $t = 2^{k(2k+1)+1}(2k+1)$. As $G$ has at least $\alpha\binom{|\B|}{k+1}$ edges, we can apply Lemma~\ref{lem:supersat} to find at least $\gamma  |\B|^{(k+1)t}$ copies of $K_{k+1}(t)$ in $G$. By Theorem~\ref{thm:HWCHB}, for every copy of $K_{k+1}(t)$ we have a $P$-intersecting $(2k+1)$-tuple of $H$-convex sets. Since a $(2k+1)$-tuple can appear in at most $\binom{|\B|-2k-1}{t(k+1)-2k-1}$ different copies of $K_{k+1}(t)$, we have at least
		\[
		\frac{\gamma|\B|^{(k+1)t}}{\binom{|\B|-2k-1}{t(k+1)-2k-1}} \geq \alpha'\binom{|\B|}{2k+1}
		\]
		$P$-intersecting $(2k+1)$-tuples in $\B$ for some $\alpha'=\alpha'(\alpha, k)$. We can apply Theorem~\ref{thm:HFHboxes2d} to find a $P$-intersecting subfamily $\B' \subset \B$ of size $\beta|\B|$.
	\end{proof}

	We generalise Theorem \ref{thm:HWFH2} to $H$-convex sets as follows.
	
	\begin{theorem}\label{thm:HWFH2}
		For every integer $k$ there is a $c_k>0$ such that for every $\alpha \in (1-c_d, 1]$ there exists a $\beta > 0$ such that the following holds. Let $H$ be a set of $k$ closed halfspaces, let $P$ be a monotone property of $H$-convex sets, and let $\mathcal{B}$ be a finite family of $H$-convex sets in $\mathbb{R}^d$. If at least $\alpha \binom{|\B|}{2}$ of the pairs in $\mathcal{B}$ are $P$-intersecting, then there exists a $P$-intersecting subfamily $\mathcal{B}'\subseteq \mathcal{B}$ of size at least $\beta |\B|$.
	\end{theorem}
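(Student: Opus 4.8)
The plan is to run everything through the $k$ orderings $<_1,\dots,<_k$ already introduced. Fix arbitrary tie-breaks, let $n=|\mathcal B|$, and attach to each $B\in\mathcal B$ its \emph{rank vector} $v(B)=(r_1(B),\dots,r_k(B))\in[n]^k$, where $r_i(B)$ is the position of $B$ in $<_i$; each coordinate is then a bijection $\mathcal B\to[n]$. For any subfamily $\mathcal B'$ one has $\cap\mathcal B'=\bigcap_i H_i^{C_i}$, where $C_i$ is the $<_i$-minimal member of $\mathcal B'$, and this set depends only on the coordinatewise minimum $\bigwedge_{B\in\mathcal B'}v(B)$. Hence, letting $\mathcal U\subseteq[n]^k$ be the set of rank vectors whose associated intersection has property $P$, the set $\mathcal U$ is an up-set and $\mathcal B'$ is $P$-intersecting if and only if $\bigwedge_{B\in\mathcal B'}v(B)\in\mathcal U$. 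In particular the largest $P$-intersecting subfamilies are exactly the ``upper boxes'' $\mathcal B_{\ge w}:=\{B:v(B)\ge w\}$ with $w\in\mathcal U$, so the whole statement becomes the combinatorial claim: \emph{if at least $\alpha\binom n2$ pairs $\{B,B'\}$ satisfy $v(B)\wedge v(B')\in\mathcal U$, then $\max_{w\in\mathcal U}|\mathcal B_{\ge w}|\ge\beta n$}.

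First I would reduce the ground set. If $\{B,B'\}$ is $P$-intersecting then $v(B)\ge v(B)\wedge v(B')\in\mathcal U$, so $v(B),v(B')\in\mathcal U$; thus every $P$-intersecting pair lies inside $\mathcal B_{\mathcal U}:=\{B:v(B)\in\mathcal U\}$, which consequently has at least roughly $\sqrt\alpha\,n$ elements, and we may pass to $\mathcal B_{\mathcal U}$ while retaining density of $P$-intersecting pairs at least $\alpha$. So from now on every single member is ``$P$-feasible''. The remaining task is to find $w\in\mathcal U$ with $|\mathcal B_{\ge w}|$ linear in $n$.

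For $k\le 2$ this is quick: if $(\tau,\dots,\tau)\notin\mathcal U$ for all $\tau\le T$, then every $P$-intersecting pair has $v(B)\wedge v(B')$ with some coordinate exceeding $T$, so the $P$-intersecting pairs are covered by $\bigcup_{i}\binom{\{B:\,r_i(B)>T\}}{2}$ and hence number at most $k\binom{n-T}{2}$; taking $T$ just above $\bigl(1-\sqrt{\alpha/k}\bigr)n$ forces a diagonal point $(\tau,\dots,\tau)\in\mathcal U$ with $\tau\le T$, and $\mathcal B_{\ge(\tau,\dots,\tau)}$ has at least $n-k(\tau-1)$ members, which is a positive fraction of $n$ precisely when $\alpha>(k-1)^2/k$. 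One can refine the same idea coordinatewise, in Katchalski's one-dimensional spirit: fixing $<_i$, the $i$-th coordinates behave like an interval family, and density $\alpha$ close to $1$ forces a threshold $t_i$ with $\{B:r_i(B)\ge t_i\}$ of size at least $(1-\sqrt{1-\alpha})n$ still dense in $P$-intersecting pairs; choosing such thresholds simultaneously with $(t_1,\dots,t_k)\in\mathcal U$ yields the desired upper box $\mathcal B_{\ge(t_1,\dots,t_k)}$.

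The main obstacle is exactly the combination of these coordinatewise thresholds: the trivial bound $|\mathcal B_{\ge(t_1,\dots,t_k)}|\ge n-\sum_i(t_i-1)\ge\bigl(1-k\sqrt{1-\alpha}\bigr)n$ is vacuous for $k\ge3$. The point is that this loss is incurred only when the ``bottom parts'' $\{B:r_i(B)<t_i\}$ are almost pairwise disjoint, i.e.\ when the orderings $<_1,\dots,<_k$ are nearly ``independent'' on $\mathcal B$; in that regime an inclusion--exclusion (anti-concentration) estimate shows $\mathcal B_{\ge(t_1,\dots,t_k)}$ still has size $\ge c_k n$ for a constant $c_k>0$ depending only on $k$. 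The cleanest way to set up this dichotomy is an induction on $k$: using Claim~\ref{cla:Hstructure} repeatedly one either passes to a large subfamily on which two orderings are ``aligned'', which effectively reduces $k$ by one while the reduced up-set is $\{w'\in[n]^{k-1}:(w',1)\in\mathcal U\}$ (and where one must argue that enough pairs survive as nice pairs for the reduced instance — the delicate bookkeeping point), or else one is in the correlated case where the elementary bound already suffices. The quantitative price is a shrinkage of $c_k$ and $\beta$ that may be exponential in $k$, which is harmless for the statement as phrased, but keeping track of exactly how much density is lost at each step of the reduction is the one genuinely technical ingredient.
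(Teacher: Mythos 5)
Your reformulation is correct and tidy: encoding each $H$-convex set by its rank vector $v(B)\in[n]^k$, noting that $\cap\mathcal{B}'$ depends only on $\bigwedge_{B\in\mathcal{B}'}v(B)$, and observing that the relevant set $\mathcal{U}\subseteq[n]^k$ is an up-set does reduce the theorem to the combinatorial statement ``many pairs with $v(B)\wedge v(B')\in\mathcal{U}$ force some $w\in\mathcal{U}$ with $|\{B:v(B)\ge w\}|\ge\beta n$''. The case $k\le 2$ via diagonal points is also fine. But for $k\ge 3$, which is the actual content of the theorem, you do not prove this statement: you only describe a dichotomy/induction whose two branches are each left unjustified, and you flag the crucial bookkeeping yourself. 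Concretely, (i) the existence of small thresholds $t_i=O(\sqrt{1-\alpha}\,n)$ with $(t_1,\dots,t_k)\in\mathcal{U}$ is asserted, not derived from the pair hypothesis; (ii) the ``independent'' branch is backwards: if the bottom parts $\{B:r_i(B)<t_i\}$ are almost pairwise \emph{disjoint}, then $|\mathcal{B}_{\ge(t_1,\dots,t_k)}|$ really is about $n-\sum_i(t_i-1)$, so inclusion--exclusion cannot rescue anything there (overlap is what helps, disjointness is the bad case); and (iii) in the induction step, aligning two orderings on a subfamily does not reduce the up-set to $\{w'\in[n]^{k-1}:(w',1)\in\mathcal{U}\}$ — that would correspond to forcing one coordinate to its global minimum, whereas identifying $<_i$ with $<_j$ gives the restriction of $\mathcal{U}$ to the diagonal $w_i=w_j$ — and the question of how much pair density survives the repeated halving via Claim~\ref{cla:Hstructure} is exactly the step you defer. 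As it stands the argument for general $k$ is a plan, not a proof.

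For comparison, the paper closes this gap by a different mechanism: apply the Erd\H{o}s--Szekeres theorem in each ordering to get $N(k)$ such that every $N$ sets contain $k$ that are \emph{ordered consistently}; this gives a constant fraction $c_k$ of consistently ordered $k$-tuples, so for $\alpha>1-c_k$ a positive fraction $\alpha'=\alpha+c_k-1$ of $k$-tuples are both consistently ordered and pairwise $P$-intersecting. The key observation (the Proposition in the paper) is that for a consistently ordered tuple the full intersection equals the intersection of just two of its members, so these tuples are themselves $P$-intersecting; then Theorem~\ref{thm:HFHboxes2d} with parameter $k$ finishes. In your language this says: among any $N$ rank vectors some $k$ form a chain-like configuration whose coordinatewise minimum is already realised by a pair, which is precisely the leverage your sketch is missing.
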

	
	\begin{proof}[Proof of Theorem~\ref{thm:HWFH2}]
		By applying the Erd\H os-Szekeres theorem successively for each ordering, we obtain an $N=N(k)$ such that in any set of $H$-convex sets of size $N$ there are $k$ which are ordered consistently. Thus there are at least
		\[
		\frac{\binom{|\B|}{N}}{\binom{|\B|-k}{N-2d}} =\frac{(N-k)!(k)!}{N!}\binom{|\B|}{k}\]
		distinct subfamilies of $\mathcal{B}$ of size $k$ which are ordered consistently.

		By setting $c_k=\frac{(N-k)!(k)!}{N!}$, and $\alpha'=\alpha+\frac{(N-k)!(k)!}{N!}-1$, we obtain at least $\alpha'\binom{|\B|}{k}$ many consistently ordered $k$ tuples among which the intersection of any pair has property $P$.
		The following proposition implies that the intersection of any of these $k$-tuples has property $P$.
		
		\begin{prop}
			If $\B=\{B_1 <_1 \ldots <_1 B_{m}\}$ is ordered consistently, then $\cap_{i=1}^{k} B_i=B_1 \cap B_m $.
		\end{prop}
		
		\begin{proof}
			As $\B$ is ordered consistently, for every $i \in [k]$ either $H^{B_1}_i \subset H^{B_2}_i \subset \ldots \subset H^{B_k}_i$ or $H^{B_1}_i \supset H^{B_2}_i \supset \ldots \supset H^{B_k}_i$ holds. Let $j(i) = 1$ in the first case when $H^{B_1}_i \subset H^{B_k}_i$ and let $j(i) = m $ in the second when $H^{B_1}_i \supset H^{B_k}_i$. With this notation
			\[
			\bigcap_{\ell = 1}^k B_\ell = \bigcap_{\ell = 1}^k\bigcap_{i = 1}^{k}H^{B_\ell}_i = \bigcap_{i = 1}^{k}\bigcap_{\ell = 1}^kH^{B_\ell}_i = \bigcap_{i = 1}^{k}H^{B_{j(i)}}_i = B_1 \cap B_m.
			\]
		\end{proof}
		
		Thus, the intersection of $\alpha'\binom{|\B|}{k}$ of the $k$-tuples of $\mathcal{B}$ have property $P$ and we can apply Theorem~\ref{thm:HFHboxes2d}.
	\end{proof}
	
	Our last result is the generalisation of Theorem \ref{thm:pqboxes} to $H$-convex sets. Similarly to the other results, it does not depend on the dimension of the space, only on the number of halfspaces in $H$.
	
	\begin{theorem}\label{thm:Hpqboxes}
		For every $p \geq q \geq k+1$ there exists $c = c(p,q,k)$ such that the following holds. Let $H$ be a set of $k$ closed halfspaces, let $P$ be a monotone property of $H$-convex sets, and let $\B$ be a finite family of $H$-convex sets in in $\Re^d$. If among every $p$ members of $\B$, some $q$ are $P$-intersecting, then there exists a family $\F$ of $H$-convex sets with property $P$ such that $|\F| \leq c$ and for every $B \in \B$ there is a $D \in \F$ with $D \subset B$.
	\end{theorem}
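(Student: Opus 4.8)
The plan is to follow the Alon--Kleitman proof of the $(p,q)$-theorem \cite{alon1992piercing}, with points replaced by \emph{$P$-witnesses}: call an $H$-convex set $D$ a $P$-witness if $D=\bigcap\mathcal{G}$ for some $P$-intersecting $\mathcal{G}\subseteq\mathcal{B}$, and let $\mathcal{W}$ be the finite set of all $P$-witnesses. I would first record three preliminary facts. (a) We may assume every $B\in\mathcal{B}$ has property $P$ (this is necessary for the conclusion, since $D\subseteq B$ and $D\in P$ force $B\in P$); in particular every $\{B\}$ is $P$-intersecting, so every $B$ is itself a $P$-witness. We may also assume $|\mathcal{B}|\ge p$, since otherwise the hypothesis is vacuous and $\mathcal{S}=\mathcal{B}$ already works (each $B$ contains itself). (b) If $D$ is an $H$-convex set with property $P$ and $D\subseteq B$, then $D\subseteq B':=\bigcap\{B''\in\mathcal{B}:D\subseteq B''\}$ by monotonicity, and $B'\in\mathcal{W}$ is contained in at least as many members of $\mathcal{B}$ as $D$; hence it suffices to find $\mathcal{S}\subseteq\mathcal{W}$ with $|\mathcal{S}|\le c(p,q,k)$ such that every $B\in\mathcal{B}$ contains some member of $\mathcal{S}$. (c) Writing $H_i^{D}$ for the tightest translate of $H_i$ containing $D$, one has $D\subseteq B\iff H_i^{D}\subseteq H_i^{B}$ for every $i\in[k]$; that is, containment of $H$-convex sets is the coordinatewise order on the ``offset vectors'' $(H_1^{D},\dots,H_k^{D})$.

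The next step is to extract a fractional-Helly statement from the $(p,q)$-hypothesis. After padding $H$ to a set of $2k+1$ closed halfspaces (by repeating halfspaces, which changes neither the family of $H$-convex sets nor the property), a standard double counting applies: every $p$-subset of $\mathcal{B}$ contains a $P$-intersecting $q$-subset, every $q$-subset lies in $\binom{|\mathcal{B}|-q}{p-q}$ of the $p$-subsets, and --- since a subfamily of a $P$-intersecting family is $P$-intersecting (its intersection is larger) --- every $(k+1)$-subset of a $P$-intersecting $q$-subset is $P$-intersecting. As $q\ge k+1$, this gives that for every $\mathcal{B}'\subseteq\mathcal{B}$ at least $\binom{p}{q}^{-1}\binom{|\mathcal{B}'|}{k+1}$ of the $(k+1)$-tuples of $\mathcal{B}'$ are $P$-intersecting. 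Theorem~\ref{thm:HFHboxesd+1} then yields a constant $\beta_0=\beta_0(p,q,k)>0$ such that every $\mathcal{B}'\subseteq\mathcal{B}$ contains a $P$-intersecting subfamily of size at least $\beta_0|\mathcal{B}'|$; equivalently, some $D\in\mathcal{W}$ satisfies $D\subseteq B$ for at least a $\beta_0$-fraction of $\mathcal{B}'$.

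The transversal bound then comes from linear programming duality and the $\varepsilon$-net theorem. Take the covering LP: minimise $\sum_{D\in\mathcal{W}}w(D)$ over $w\ge0$ subject to $\sum_{D\in\mathcal{W},\,D\subseteq B}w(D)\ge1$ for all $B\in\mathcal{B}$, with optimum $\tau^{*}$; its dual maximises $\sum_{B\in\mathcal{B}}y(B)$ over $y\ge0$ subject to $\sum_{B\in\mathcal{B},\,B\supseteq D}y(B)\le1$ for all $D\in\mathcal{W}$. Clearing denominators in an optimal dual solution produces a multiset $\mathcal{M}$ of members of $\mathcal{B}$; granting the weighted version of the previous step (see below), there is a $P$-witness $D$ contained in a $\beta_0$-fraction of $\mathcal{M}$, so the packing constraint at $D$ forces $\sum_{B}y(B)\le 1/\beta_0$, whence $\tau^{*}\le t_0:=1/\beta_0$. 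Renormalising an optimal fractional transversal to a probability measure $\nu$ on $\mathcal{W}$ gives $\nu(R_{B})\ge 1/\tau^{*}\ge 1/t_0$ for every $B\in\mathcal{B}$, where $R_{B}:=\{D\in\mathcal{W}:D\subseteq B\}$. By (c), the range space $(\mathcal{W},\{R_{B}:B\in\mathcal{B}\})$ is a subsystem of the lower orthants in $\mathbb{R}^{k}$, hence has VC dimension at most $k$; the $\varepsilon$-net theorem with $\varepsilon=1/t_0$ then provides $\mathcal{S}\subseteq\mathcal{W}$ with $|\mathcal{S}|=O(k\,t_0\log t_0)$ meeting every range of $\nu$-measure at least $1/t_0$, hence every $R_{B}$. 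Thus $\mathcal{S}$ is a family of $H$-convex sets with property $P$, of size at most $c(p,q,k):=O(k\,t_0\log t_0)$, with every $B\in\mathcal{B}$ containing a member of $\mathcal{S}$.

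The step I expect to be the main obstacle is the \emph{weighted} fractional-Helly statement used above --- that after replacing each $B\in\mathcal{B}$ by parallel copies, a positive fraction of the $(k+1)$-tuples of the resulting multiset is still $P$-intersecting. This is precisely where the $(p,q)$-hypothesis must be used in full strength, not merely through the count of $P$-intersecting tuples in $\mathcal{B}$ itself: it rules out the weightings concentrated on a large subfamily no two of whose members are $P$-intersecting, which are exactly the ones a naive blow-up would break. Everything else is the standard Alon--Kleitman machinery, made to work here by the fact that the containment order on $H$-convex sets has bounded VC dimension --- which is also why the ordinary $\varepsilon$-net theorem suffices and one need not invoke weak $\varepsilon$-nets as in the general convex setting. (A more economical padding of $H$ in the second step --- to the nearest odd number $\ge k$ of halfspaces rather than to $2k+1$ --- lowers the requirement to $q\ge\lceil(k+1)/2\rceil$, which is the form that specialises to Theorem~\ref{thm:pqboxes}.)
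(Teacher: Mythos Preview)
Your approach is correct and is essentially the paper's: both feed the fractional Helly theorem (Theorem~\ref{thm:HFHboxesd+1}) into the Alon--Kalai--Matou\v{s}ek--Meshulam machinery \cite{alon2002transversal}, the paper by a one-line black-box citation after naming the relevant set system, you by unfolding the LP-duality and $\varepsilon$-net steps explicitly. Your observation~(c) --- that containment of $H$-convex sets embeds in the coordinatewise order on $\Re^{k}$, so the range space $\{R_{B}:B\in\B\}$ has VC~dimension at most~$k$ --- is a genuine sharpening absent from the paper: it lets you use ordinary Haussler--Welzl $\varepsilon$-nets instead of the weak $\varepsilon$-nets that \cite{alon2002transversal} extracts from fractional Helly, and yields an explicit bound $c=O(k\,t_{0}\log t_{0})$. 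As for the ``main obstacle'' you flag, it dissolves by pigeonhole once your preliminary~(a) is in place: among any $(p-1)(q-1)+1$ indices of the dual-LP multiset $\mathcal{M}$, either some single $B\in P$ occurs with multiplicity at least~$q$ (and $q$ copies of $B$ are trivially $P$-intersecting), or there are at least~$p$ distinct members of~$\B$ present, to which the original $(p,q)$-hypothesis applies directly; either way some $q$ indices are $P$-intersecting, so $\mathcal{M}$ enjoys the $\bigl((p-1)(q-1)+1,\,q\bigr)$-property and your double count and the remainder of the argument go through unchanged.
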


	We only include a brief outline of the proof of Theorem \ref{thm:Hpqboxes}, as the proof follows a method which is very standard by now. It is based on the argument for Euclidean convex sets, which was generalised for abstract set systems by Alon, Kalai, Matou\v sek and Meshulam \cite{alon2002transversal}.
	A combination of \cite[Theorem 8]{alon2002transversal} and \cite[Theorem 9]{alon2002transversal} states that if a set system satisfies the fractional Helly property, then a $(p,q)$-theorem holds for the same set system. In order to prove Theorem~\ref{thm:Hpqboxes} from Theorem~\ref{thm:HFHboxesd+1}, one can use the following set system. Let the base set consist of all the $H$-convex sets with property $P$ in $\mathbb{R}^d$, and let the sets in the system correspond to the $H$-convex sets in $\B$. An element $v$ of the base set is contained in a set $s$ of the system, if $v \subseteq s$ holds for the corresponding $H$-convex sets.

	\bibliographystyle{alpha}
	\bibliography{biblio}
	
\end{document}